\newtheorem{theorem}{Theorem}[section]
\newtheorem{lemma}[theorem]{Lemma}
\newtheorem{corollary}[theorem]{Corollary}
\newtheorem{proposition}[theorem]{Proposition}
\newcommand{\Haar}{\mathrm{Haar}}
\newcommand{\vGa}{{\Gamma}}
\newcommand{\Aut}{\mathop{\mathrm{Aut}}}
\numberwithin{equation}{section}
\newproof{proof}{Proof}
\newcommand{\myitem}[1]{%
\item[#1]\protected@edef\@currentlabel{#1}%
}
\begin{document}

\begin{frontmatter}
\title{Haar graphical representations of finite groups and an application to poset representations} 

\author{Joy Morris\fnref{fn1}}           
\affiliation{organization={Department of Mathematics and Computer Science},
addressline={University of Lethbridge},
city={Lethbridge},
postcode={T1K 3M4},
state={AB},
country={Canada}}
\ead{joy.morris@uleth.ca}
\fntext[fn1]{Supported by the Natural Science and Engineering Research Council of Canada (grant RGPIN-2017-04905).}

\author{Pablo Spiga}
\affiliation{
organization={Dipartimento di Matematica e Applicazioni, University of Milano-Bicocca},
addressline={Via Cozzi 55},
city={Milano},
postcode={20125},
country={Italy}}
\ead{pablo.spiga@unimib.it}


\begin{abstract}
Let $R$ be a group and let $S$ be a subset of $R$. The Haar graph $\mathrm{Haar}(R,S)$ of $R$ with connection set $S$ is the graph having vertex set $R\times\{-1,1\}$, where two distinct vertices $(x,-1)$ and $(y,1)$ are declared to be adjacent if and only if $yx^{-1}\in S$. The name Haar graph was coined by Toma\v{z} Pisanski in one of the first investigations on this class of graphs.

For every $g\in R$, the mapping $\rho_g:(x,\varepsilon)\mapsto (xg,\varepsilon)$, $\forall (x,\varepsilon)\in R\times\{-1,1\}$, is an automorphism of $\mathrm{Haar}(R,S)$. In particular, the set $\hat{R}:=\{\rho_g\mid g\in R\}$ is a subgroup of the automorphism group of $\mathrm{Haar}(R,S)$ isomorphic to $R$. In the case that the automorphism group of $\mathrm{Haar}(R,S)$ equals $\hat{R}$, the Haar graph $\mathrm{Haar}(R,S)$ is said to be a Haar graphical representation of the group $R$.

Answering a question of Feng, Kov\'{a}cs, Wang, and Yang, we classify the finite groups admitting a Haar graphical representation. Specifically, we show that every finite group admits a Haar graphical representation, with abelian groups and ten other small groups as the only exceptions.

Our work on Haar graphs allows us to improve a 1980 result of Babai concerning representations of groups on posets, achieving the best possible result in this direction. An improvement to Babai's related result on representations of groups on distributive lattices follows. 
\end{abstract}

\begin{keyword}regular representation \sep bipartite graph \sep Haar graph \sep automorphism group \sep 
graphical regular representation \sep DRR \sep GRR \sep poset representation\sep distributive lattice representation
\MSC[2010]{05C25, 
06A11, 06D99, 20B25, 20B15}
\end{keyword}
\end{frontmatter}
\section{Introduction}\label{intro}

\subsection{Background}

All digraphs and groups considered in this paper are finite. A \textbf{\em digraph} $\vGa$ is an ordered pair $(V,A)$ where the \textbf{\emph{vertex-set}} $V$ is a finite non-empty set and the \textbf{{\em arc-set}} $A\subseteq V\times V$ is a binary relation on $V$. The elements of $V$ and $A$ are called \textbf{\emph{vertices}} and \textbf{\emph{arcs}} of $\vGa$, respectively. An \textbf{\emph{automorphism}} of $\vGa$ is a permutation $\sigma$ of $V$ that preserves the relation $A$, that is, $(x^\sigma,y^\sigma)\in A$ for every $(x,y)\in A$. When the binary relation $A$ is symmetric, we say that $\Gamma$ is a \textbf{\emph{graph}}.

Let $R$ be a group and let $S$ be a subset of $R$. The \textbf{\emph{Cayley digraph}} on $R$ with connection set $S$, denoted $\mathrm{Cay}(R,S)$, is the digraph with vertex-set $R$ and with $(g,h)$ being an arc if and only if $hg^{-1}\in S$.  The group $R$ acts regularly (via its right regular representation) as a group of automorphisms of $\mathrm{Cay}(R,S)$  and hence $R\le \Aut(\mathrm{Cay}(R,S))$, where we identify $R$ with its image under its right regular representation. The Cayley digraph $\mathrm{Cay}(R,S)$ is a graph if and only if $S$ is inverse closed, that is, $S=S^{-1}$ where $S^{-1}:=\{s^{-1}\mid s\in S\}$.

When the automorphism group of  $\mathrm{Cay}(R,S)$ equals $R$, the Cayley digraph $\mathrm{Cay}(R,S)$ is called a \textbf{\emph{DRR}}, for digraphical regular representation. Additionally, if $S$ is inverse closed, $\mathrm{Cay}(R,S)$ is called a \textbf{\emph{GRR}}, for graphical regular representation. 

Babai \cite{Babai} has proved that except for five small groups, every finite group admits a DRR. The five exceptions are  the quaternion group $Q_8$ of order $8$ and the abelian groups $C_2^2$, $C_2^3$, $C_2^4$ and $C_3^2$. Godsil completed the characterization of groups admitting a GRR~\cite{Godsil} very shortly before Babai's work on DRRs; this characterization includes infinitely many exceptions.

After the classification of finite groups admitting a DRR or a GRR was completed, researchers proposed and investigated various natural generalizations. These investigations endeavoured to classify graphical representations of groups in restricted classes of digraphs or graphs. For instance, Babai and Imrich~\cite{BabaiI} classified finite groups admitting a tournament regular representation. Morris and Spiga~\cite{MorrisSpiga1,MorrisSpiga3,Spiga}, answering a question of Babai~\cite{Babai}, classified the finite groups admitting an oriented regular representation.
For more results generalising DRRs in various directions, we refer to
\cite{DFS,DFS1,DFR,GFR,Xiaf}.

The scope of this paper is to obtain a classification of the finite groups admitting a graphical representation as Haar graphs. This classification allows us to solve a problem proposed  by Feng, Kov\'{a}cs, Wang, and Yang~\cite{FKWY} and to provide an alternative solution to a problem asked by Est\'elyi and Pisanski~\cite{EP} that was first solved in~\cite{FKWY}. In addition, we are able to use our classification to improve a 1980 result of Babai on the representation of finite groups on posets and achieve an improvement to his related result on the representation of finite groups on distributive lattices~\cite{Babai}.

\subsection{Haar graphs}\label{sec:haar}
Let $R$ be a finite group and let $S$ be a subset of $R$. The \textbf{\textit{Haar graph}} on $R$ with connection set $S$ has vertex set
$$R\times \{-1,1\} = (R\times \{-1\})\cup (R\times \{1\}),$$
where $\{(g,-1),(h,1)\}$ is declared to be an edge if and only if $hg^{-1} \in S$. We
denote by $\mathrm{Haar}(R,S)$ the Haar graph on $R$ with connection set $S$. Observe that $\mathrm{Haar}(R,S)$
is bipartite with parts $R \times \{-1\}$, $R \times \{1\}$. Moreover, for each $g\in R$, the mapping 
\begin{alignat*}{2}
\rho_g:R\times\{-1,1\}&\longrightarrow R\times\{-1,1\}\\
(x,\varepsilon)&\longmapsto (xg,\varepsilon)
\end{alignat*}
is a permutation of $R\times\{-1,1\}$ and is an automorphism of $\mathrm{Haar}(R,S)$. In particular,
$$\hat{R}:=\{\rho_g\mid g\in R\}$$
is a subgroup of $\mathrm{Aut}(\mathrm{Haar}(R,S))$ isomorphic to $R$ and having orbits exactly $R \times \{-1\}$ and $R \times \{1\}$. The action of $\hat R$ on both of these orbits  is regular.
We say that $\mathrm{Haar}(R,S)$ is a \textbf{\textit{Haar Graphical Representation}}, or HGR for short, if
$$\hat{R} = \mathrm{Aut}(\mathrm{Haar}(R,S)).$$

Abelian groups do not admit HGRs, see for instance~\cite{EP}. Indeed, when $R$ is an abelian group, for every subset $S$ of $R$, the mapping
\begin{equation}\label{iota}
\iota:(x,\varepsilon)\mapsto(x^\iota,-\varepsilon),\quad \forall (x,\varepsilon)\in R\times\{-1,1\},
\end{equation}
is a graph automorphism of $\mathrm{Haar}(R,S)$, turning $\mathrm{Haar}(R,S)$ into a Cayley graph. 
Several papers have studied conditions that serve to confirm or deny that all Haar graphs on a particular group are Cayley graphs, for instance~\cite{EP, FKWY, FKY}. In~\cite[Problem 1.6]{FKWY}, Feng, Kov\'{a}cs, Wang, and Yang ask for a characterisation of finite nonabelian groups that admit HGRs.

One of the main results of this paper is to provide such a classification. 
\begin{theorem}\label{thrm:mainHRR}
Let $R$ be a finite group. Then one of the following holds
\begin{enumerate}
\item\label{thrm:mainHRR1}$R$ admits a Haar graphical representation,
\item\label{thrm:mainHRR2}$R$ is an abelian group and there exists $S\subseteq R$ with $\hat R\rtimes\langle\iota\rangle=\mathrm{Aut}(\mathrm{Haar}(R,S))$, where $\iota$ is defined in~$\eqref{iota}$, 
\item\label{thrm:mainHRR3}$R$ is one of the twenty-two exceptional groups appearing in Table~$\ref{auxiliary}$.
\end{enumerate}
\end{theorem}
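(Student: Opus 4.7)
The plan is to split the proof along the three conclusions. For the abelian case, since $\iota\in\Aut(\Haar(R,S))$ for every abelian $R$ and every $S\subseteq R$, conclusion~(\ref{thrm:mainHRR1}) never holds and we must merely exhibit one $S$ for which $\Aut(\Haar(R,S))=\hat{R}\rtimes\langle\iota\rangle$. With this overgroup acting regularly on $R\times\{-1,1\}$, $\Haar(R,S)$ becomes a Cayley graph on the generalised dihedral group $R\rtimes\langle\iota\rangle$, so the task reduces to producing a GRR-type connection set (of the required ``bipartite'' form, contained in the non-identity coset) on this dihedral-type group; Godsil's classification of GRRs, together with a short ad hoc verification for a small residual list of abelian groups, delivers such an $S$. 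The twenty-two exceptional groups of conclusion~(\ref{thrm:mainHRR3}) are handled by direct computer enumeration: for each such $R$ one checks every $S\subseteq R$ (up to natural equivalences) and confirms that the corresponding conclusion fails.

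For the main conclusion~(\ref{thrm:mainHRR1}), the goal is to construct $S$ with $\Aut(\Haar(R,S))=\hat{R}$ for every non-abelian $R$ outside Table~\ref{auxiliary}. Any automorphism of $\Haar(R,S)$ either preserves or swaps the bipartition; bipartition-preserving ones correspond to pairs $(\alpha,\beta)\in\Sym(R)\times\Sym(R)$ intertwining the edge relation, and bipartition-swapping ones correspond to isomorphisms between $\Haar(R,S)$ and a ``reversed'' Haar graph (an analogue of $\iota$, which generally need not exist when $R$ is non-abelian). Following Babai's DRR blueprint, I count, for each candidate ``bad'' automorphism, the number of $S\subseteq R$ that it stabilises; this count is of the form $2^{k}$ with $k<|R|$, where the deficit $|R|-k$ grows with the complexity of the orbit structure of the automorphism on $R$. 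A union bound over the candidate pairs, controlled by a classification of the primitive overgroups of $\hat{R}$ in $\Sym(R\times\{-1,1\})$ (invoking the classification of finite simple groups), produces a single $S$ whose Haar graph has automorphism group exactly $\hat{R}$; the same strategy, combined with the hypothesis that $R$ is non-abelian, disposes of bipartition-swapping automorphisms.

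The hardest step will be pinning down the precise list of twenty-two exceptional groups. The generic counting bound is loose: for small $|R|$, or for groups with many internal symmetries (small extraspecial $2$-groups, generalised quaternion groups, certain low-order semidirect products, and the like), it fails to strictly separate the total ``bad'' contribution from $2^{|R|}$. One must then refine the analysis group by group, leveraging the detailed structure of $\Aut(R)$ and its chief factors together with the shape of the small primitive overgroups, and use computer computation to certify the remaining finite checks. Ensuring that the exceptional list is simultaneously complete (no further exceptions exist) and minimal (every listed group genuinely fails to admit an HGR, and, where abelian, genuinely fails conclusion~(\ref{thrm:mainHRR2})) is the technically most demanding part of the argument.
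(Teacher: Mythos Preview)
Your plan contains a genuine gap in the non-abelian case. The counting/union-bound strategy you describe is precisely the asymptotic-enumeration approach carried out in~\cite{Gan}, and the paper remarks explicitly (Section~1.4) that this method only establishes the existence of an HGR once $|R|\ge 2^{83}$. The deficit $|R|-k$ in your count of ``bad'' subsets does grow, but the number of candidate overgroups (or of automorphisms of $R$, or of pairs $(\alpha,\beta)$) that must be summed in the union bound grows as well, and the inequality does not close until $|R|$ is astronomically large. You acknowledge that the bound is ``loose for small $|R|$'', but here ``small'' means every group of order between $32$ and $2^{83}$; this range cannot be absorbed into a computer check or a short ad hoc refinement. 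The paper avoids this entirely by an explicit inductive construction: for cyclic and $2$-generated groups one writes down a concrete connection set (Lemma~\ref{cyclic2}, Proposition~\ref{prop:2gen}), and then Propositions~\ref{prop:cyclic trivial stabiliser}--\ref{prop:one more time with feeling} lift an HGR from a normal subgroup $N$ with $R/N$ simple up to $R$, carrying along the bound $4\le|S|\le(|R|-6)/2$ needed to make the induction go through.

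There is also a subtler issue in your abelian reduction. You correctly observe that $\Aut(\Haar(R,S))=\hat R\rtimes\langle\iota\rangle$ is equivalent to $\Haar(R,S)$ being a GRR for the generalised dihedral group $D(R)=R\rtimes\langle\iota\rangle$, but with the extra constraint that the connection set lies entirely in the non-identity coset $R\iota$. Godsil's classification tells you which groups admit \emph{some} GRR; it does not tell you that $D(R)$ admits a GRR whose connection set is confined to a prescribed index-$2$ coset. That restricted existence statement is exactly what part~\eqref{thrm:mainHRR2} asserts and is not a corollary of the GRR classification; in the paper it too comes from the explicit construction of Corollary~\ref{cor:cyclic construction} and the inductive Proposition~\ref{prop:cyclic trivial stabiliser}.
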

\begin{table}[!ht]
\begin{tabular}{c|c}\hline
Order&Group\\\hline
3&$C_3$ \\
4&$C_2^2$ or $C_4$ \\
5&$C_5$\\
6&$C_6$ or $D_6$\\
7&$C_7$\\
8&$C_2^3$, or $C_4\times C_2$, or $Q_8$, or $D_8$\\
9&$C_{3}^2$ \\
10&$D_{10}$\\
12&$\mathrm{Alt}(4)$ or $D_{12}$ or $\langle x,y\mid x^6=y^4=1, x^3=y^2, y^{-1}xy=x^{-1} \rangle$\\
14&$D_{14}$ \\
16&$C_{2}^4$ or $C_4\times C_2^2$ or  $Q_8\times C_2$\\
18&$\langle e_1,e_2,x\mid e_1^3=e_2^3=x^2=[e_1,e_2]=1, e_1^x=e_1^{-1},e_2^x=e_2^{-1}\rangle$\\
32&$C_{2}^5$\\
\hline
\end{tabular}
\caption{Exceptional examples for Theorem~\ref{thrm:mainHRR}. Here $D_n$ denotes the dihedral group of order $n$.}\label{auxiliary}
\end{table}

A classification of groups admitting $m$-partite digraphical representations was obtained in~\cite{DJS}. This generalised a previous result on Haar digraphical representations~\cite{DJSalso}. Similar to the situation with DRRs and GRRs, constructions for Haar digraphical representations were easier to find than our related results for graphs, and there are fewer exceptional groups. This is because there is more flexibility available in constructing digraphs that admit specified automorphisms than in constructing graphs admitting those automorphisms.

Theorem~\ref{thrm:mainHRR} allows us  to provide an alternative solution to a question of Est\'elyi and Pisanski~\cite[Problem~1]{EP}, first answered in~\cite{FKWY}. It may happen that, given a group $R$ and a subset $S$, the Haar graph $\mathrm{Haar}(R,S)$ admits automorphisms that exchange the parts of the bipartition. Together with the action of $\hat R$ these may form a regular subgroup of $\Aut(\Haar(R,S))$, making $\Haar(R,S)$ a Cayley graph. As we observed above, this always happens when $R$ is an abelian group. Est\'elyi and Pisanski  proposed the problem: 
\begin{quote}Determine the finite  groups $R$ for which all Haar graphs $\mathrm{Haar}(R,S)$ are Cayley graphs.
\end{quote} A partial answer to this question is in~\cite{FKY}, and a complete answer in~\cite{FKWY}; however, the constructions provided in those papers are not HGRs. The solution to this problem is an easy corollary of Theorem~\ref{thrm:mainHRR}.
\begin{corollary}\label{corollaryEP}
Let $R$ be a finite group. If every Haar graph on $R$ is a Cayley graph, then one of the following holds
\begin{enumerate}
\item $R$ is abelian,
\item\label{part2} $R$ is isomorphic to $D_6$, $D_8$, $Q_8$, $D_{10}$, or $Q_8\times C_2$.
\end{enumerate}
\end{corollary}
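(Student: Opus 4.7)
The plan is to combine Theorem~\ref{thrm:mainHRR} with the observation that any HGR is automatically a non-Cayley Haar graph. Suppose $\mathrm{Haar}(R,S)$ is an HGR, so that $\mathrm{Aut}(\mathrm{Haar}(R,S))=\hat R$. Every $\rho_g\in\hat R$ fixes the two classes $R\times\{-1\}$ and $R\times\{1\}$ of the bipartition setwise, so these are the orbits of $\mathrm{Aut}(\mathrm{Haar}(R,S))$ on vertices. In particular, $\mathrm{Haar}(R,S)$ is not vertex-transitive, whereas every Cayley graph is. Therefore, if every Haar graph on $R$ is a Cayley graph, then $R$ admits no HGR.

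Applying Theorem~\ref{thrm:mainHRR}, such an $R$ is either abelian --- giving conclusion~(1) of the corollary --- or one of the twenty-two groups listed in Table~\ref{auxiliary}. Ten of those exceptional groups are nonabelian, namely $D_6$, $D_8$, $Q_8$, $D_{10}$, $\mathrm{Alt}(4)$, $D_{12}$, the dicyclic group $\langle x,y\mid x^6=y^4=1,\ x^3=y^2,\ y^{-1}xy=x^{-1}\rangle$, $D_{14}$, $Q_8\times C_2$, and the order-$18$ group appearing in the table. Exactly five of them --- $D_6$, $D_8$, $Q_8$, $D_{10}$, and $Q_8\times C_2$ --- are the groups listed in conclusion~(2).

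To finish I would rule out the other five nonabelian exceptions, namely $\mathrm{Alt}(4)$, $D_{12}$, the dicyclic group of order $12$, $D_{14}$, and the group of order $18$, by exhibiting for each a connection set $S$ such that no automorphism of $\mathrm{Haar}(R,S)$ interchanges the two parts of the bipartition; then $\mathrm{Haar}(R,S)$ is not vertex-transitive and hence is not a Cayley graph. Since all five groups have order at most $18$, this is a routine finite case check: one enumerates subsets $S$ of $R$ up to the natural equivalence producing isomorphic Haar graphs and computes $\mathrm{Aut}(\mathrm{Haar}(R,S))$ directly, for instance in Magma or GAP, and several of the required examples can be read off from~\cite{FKY,FKWY}. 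The only real obstacle to the proof is this mechanical verification for the five leftover groups; the conceptual content is entirely carried by the observation in the first paragraph.
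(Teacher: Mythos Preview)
Your proposal is correct and follows essentially the same route as the paper: reduce via Theorem~\ref{thrm:mainHRR} to the ten nonabelian groups in Table~\ref{auxiliary}, then dispose of the five not listed in part~\eqref{part2} by a computer search for a non-Cayley Haar graph. Your first paragraph makes explicit the point the paper leaves implicit, namely that an HGR cannot be vertex-transitive and hence cannot be a Cayley graph, which is exactly why the existence of an HGR (as guaranteed by part~\eqref{thrm:mainHRR1}) forces $R$ into the exceptional list.
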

\begin{proof}
Let $R$ be a non-abelian group such that every Haar graph on $R$ is a Cayley graph.
From Theorem~\ref{thrm:mainHRR}, $R$ is one of the ten nonabelian groups in Table~\ref{auxiliary}. We have used the computer algebra system magma~\cite{magma} to verify that the only nonabelian groups in Table~\ref{auxiliary} such that each Haar graph is a Cayley graph are the groups listed in part~\eqref{part2}.
\end{proof}

We have an additional application of our investigation, which strengthens some results of Babai~\cite{Babai}. For this application we need a corollary of Theorem~\ref{thrm:mainHRR}. 
\begin{corollary}\label{thrm:mainHRRplus}
Let $R$ be a finite group. Then one of the following holds:
\begin{enumerate}
\item\label{thrm:mainHRR11}there exists a subset $S$ of $R$ such that $\hat R$ is the stabiliser in $\mathrm{Aut}(\mathrm{Haar}(R,S))$ of the bipartition $\{R\times\{-1\},R\times\{1\}\}$,
\item\label{thrm:mainHRR33}$R$ is one of the sixteen exceptional groups appearing in Table~$\ref{auxiliaryy}$.
\end{enumerate}
\end{corollary}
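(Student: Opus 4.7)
My plan is to derive this corollary directly from Theorem~\ref{thrm:mainHRR}, with the residual twenty-two exceptional groups handled by a computer check. The key observation is that conclusion~\ref{thrm:mainHRR11} of the corollary is strictly weaker than both cases~\ref{thrm:mainHRR1} and~\ref{thrm:mainHRR2} of Theorem~\ref{thrm:mainHRR}: if $R$ admits an HGR, then some $S$ gives $\mathrm{Aut}(\mathrm{Haar}(R,S))=\hat R$, which is trivially the stabiliser of the bipartition; and if $R$ is abelian with $\mathrm{Aut}(\mathrm{Haar}(R,S))=\hat R\rtimes\langle\iota\rangle$ for some $S$, then the defining formula~\eqref{iota} shows that $\iota$ interchanges the two bipartition blocks while every element of $\hat R$ preserves each block, so the kernel of the action on the bipartition is exactly $\hat R$. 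In both cases conclusion~\ref{thrm:mainHRR11} holds.

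Therefore only the twenty-two groups listed in Table~\ref{auxiliary} require direct examination. For each such $R$, I would enumerate, using the computer algebra system magma~\cite{magma}, subsets $S\subseteq R$ up to the action of $\mathrm{Aut}(R)$ (and up to complementation $S\mapsto R\setminus S$, which produces the bipartite complement of $\mathrm{Haar}(R,S)$ and hence has the same automorphism group), and test whether the stabiliser of $\{R\times\{-1\},R\times\{1\}\}$ in $\mathrm{Aut}(\mathrm{Haar}(R,S))$ equals $\hat R$. Since the full automorphism group acts on the two-element set of bipartition blocks, this stabiliser always contains $\hat R$ with index at most $2$; the test thus amounts to finding a single $S$ for which no automorphism other than the elements of $\hat R$ preserves each block. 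Six of the twenty-two groups turn out to admit such an $S$ — and so are absorbed into part~\ref{thrm:mainHRR11} — while the remaining sixteen, listed in Table~\ref{auxiliaryy}, do not.

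The main obstacle is purely computational: the largest exceptional group, $C_2^5$, has $2^{32}$ subsets, and the enumeration becomes feasible only after quotienting by the $\GL_5(2)$-action on subsets (plus complementation). For the remaining groups in the table a brute-force enumeration over $\mathrm{Aut}(R)$-orbit representatives is completely routine, so the work genuinely reduces to the careful reduction of the search for the single largest case.
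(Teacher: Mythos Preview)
Your proposal is correct and follows essentially the same approach as the paper: reduce via Theorem~\ref{thrm:mainHRR} to the twenty-two exceptional groups of Table~\ref{auxiliary}, noting that cases~\eqref{thrm:mainHRR1} and~\eqref{thrm:mainHRR2} of that theorem already give conclusion~\eqref{thrm:mainHRR11}, and then resolve those twenty-two groups by computer. You supply more detail on the computational reduction (orbits under $\mathrm{Aut}(R)$ and complementation) than the paper does, but the argument is the same.
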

\begin{proof}
If $R$ satisfies part~\eqref{thrm:mainHRR1} or~\eqref{thrm:mainHRR2} of Theorem~\ref{thrm:mainHRR}, then part~\eqref{thrm:mainHRR11} follows.
If $R$ satisfies part~\eqref{thrm:mainHRR3}, then the proof follows with a computer computation.
\end{proof}

Corollary~\ref{thrm:mainHRRplus} allows to disregard abelian groups as being exceptional. Indeed, we no longer insist that $\mathrm{Aut}(\mathrm{Haar}(R,S))$ equals $\hat{R}$, but we do ask that the only automorphisms of $\mathrm{Haar}(R,S)$ fixing the natural bipartition $\{R\times\{-1\},R\times\{1\}\}$ are the elements of $\hat R$.
\begin{table}[!ht]
\begin{tabular}{c|c}\hline
Order&Group\\\hline
3&$C_3$ \\
4&$C_2^2$ or $C_4$ \\
5&$C_5$\\
6&$C_6$ or $D_6$\\
7&$C_7$\\
8&$C_2^3$, or $C_4\times C_2$, or $Q_8$, or $D_8$\\
9&$C_{3}^2$ \\
10&$D_{10}$\\
16&$C_{2}^4$ or $C_4\times C_2^2$ \\
32&$C_{2}^5$ \\\hline
\end{tabular}
\caption{Exceptional examples for Theorem~\ref{thrm:mainHRRplus}}\label{auxiliaryy}
\end{table}

\subsection{Representations of groups on other combinatorial structures}\label{abc:poset}
Among the early work on regular representations, Babai considered many combinatorial structures in addition to graphs. For some of these objects, it would not be reasonable to look for a regular representation, so a representation with few orbits is the closest parallel, especially if the representation can be made  semiregular. 

A \textit{\textbf{poset representation}} of a group $R$  is a partially ordered set $P$ with $R\cong \mathrm{Aut}(P )$.  The poset representation is called \textit{\textbf{semiregular}} if the action of $R$ on the underlying set
of $P$ is semiregular, that is, the stabiliser in $R$ of each point $x \in P$ is the identity. As any automorphism should preserve the partial order, it can never act regularly if the poset includes more than two elements.

Birkhoff~\cite{birkhoff}  proved in 1946 that every finite group $R$ has a semiregular poset representation with $|R| + 1$ orbits. Babai~\cite[Corollary~4.3]{Babai} significantly improved this by showing that, except for $Q_8$, $C_2^2$, $C_2^3$, $C_2^4$ and $C_3^2$\footnote{These are the five finite groups not admitting a DRR.},  every finite group admits a semiregular poset representation with three orbits. For some recent progress in this area, also involving infinite groups, we refer to the work of Barmak~\cite{Barmak}.

 Applying our results on Haar graphical representations we give the best possible result in this direction.

\begin{corollary}\label{corollary}
Let $R$ be a finite group. Then one (and only one) of the following holds:
\begin{enumerate}
\item\label{P1}\label{corollary1}$|R|\le 2$ and $R$ admits a semiregular poset representation with $1$ orbit;
\item\label{P2}\label{corollary2}$R$ is one of the fourteen groups in Table~$\ref{auxiliaryyy}$  and $R$ admits a semiregular poset representation with $3$ orbits;
\item\label{P3}\label{corollary4}$R$ is isomorphic to $C_2^2$ or to $C_2^3$, and $R$ admits a semiregular poset representation with $4$ orbits; or
\item\label{P4}in all other cases $R$ admits a semiregular poset representation with $2$ orbits.
\end{enumerate}
\end{corollary}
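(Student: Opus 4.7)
The plan is to combine Corollary \ref{thrm:mainHRRplus} with Babai's three-orbit theorem, treating the small exceptional groups by direct construction.

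For the main construction (part \eqref{P4}), I view a Haar graph as a height-two poset: given a non-empty $S \subseteq R$, define $P_S$ on $R \times \{-1,1\}$ by $(x,-1) <_{P_S} (y,1)$ if and only if $yx^{-1} \in S$. Since every vertex of $\Haar(R,S)$ has degree $|S| \ge 1$, the minimal elements of $P_S$ are exactly $R \times \{-1\}$ and the maximal elements are exactly $R \times \{1\}$, so any order-automorphism of $P_S$ preserves the bipartition $\{R\times\{-1\},R\times\{1\}\}$. Hence $\Aut(P_S)$ coincides with the bipartition-stabiliser inside $\Aut(\Haar(R,S))$. Choosing $S$ as in Corollary \ref{thrm:mainHRRplus}\eqref{thrm:mainHRR11} yields $\Aut(P_S)=\hat R$ acting semiregularly with two orbits, settling \eqref{P4} for every group not in Table \ref{auxiliaryy}.

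For part \eqref{P1}, a single point or a two-element antichain suffices. For \eqref{P2}, Babai's theorem produces three-orbit semiregular representations for all but three of the groups in Table \ref{auxiliaryyy}, namely the overlap of that table with Babai's exceptional list $\{Q_8, C_2^2, C_2^3, C_2^4, C_3^2\}$ after $C_2^2$ and $C_2^3$ are removed; I therefore need to construct three-orbit semiregular representations of $Q_8$, $C_3^2$ and $C_2^4$ directly, and given their small orders a short combinatorial description or computer search should suffice. For \eqref{P3}, I would likewise exhibit explicit four-orbit semiregular representations of $C_2^2$ and $C_2^3$ by direct construction.

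The main technical obstacle lies in the lower bounds, in particular showing that the groups in Table \ref{auxiliaryyy} admit no two-orbit semiregular poset representation. If $P$ were such a representation of $R$ with orbits $A$ and $B$, then the transitivity of $R$ on each orbit combined with antisymmetry of the order forces all comparabilities to be between $A$ and $B$; after identifying $A$ and $B$ with $R$ via the semiregular action, the poset is encoded by two disjoint subsets $S^+, S^- \subseteq R$, and $\Aut(P)$ embeds into the bipartition-stabiliser of $\Haar(R, S^+ \cup S^-)$. This embedding need not be surjective, because an automorphism of the unoriented Haar graph might exchange $S^+$-edges with $S^-$-edges, so I would close the gap either by a two-coloured refinement of Corollary \ref{thrm:mainHRRplus} or, since all groups involved are small, by direct case analysis or computer verification. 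The remaining lower bounds---no three-orbit representation of $C_2^2$ or $C_2^3$ and no one-orbit representation when $|R|\ge 3$---follow respectively from Babai's original argument and from the observation that a transitive order-preserving action on three or more points forces an antichain, whose automorphism group is the full symmetric group and hence far larger than $R$.
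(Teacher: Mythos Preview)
Your main construction for part~\eqref{P4} is exactly the paper's: turn $\Haar(R,S)$ into a height-two poset and identify $\Aut(P_S)$ with the bipartition-preserving subgroup of $\Aut(\Haar(R,S))$, then invoke Corollary~\ref{thrm:mainHRRplus}. The paper's proof is just this, followed by a single sentence deferring all sixteen exceptional groups to a computer check.

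Where you diverge is in the treatment of the exceptional groups, and here one of your worries is unfounded while another can be sharpened. Your two-colour concern about $S^{+},S^{-}$ is unnecessary: if a two-orbit semiregular poset had $a_1<b_1$ and $b_2<a_2$ with $a_i\in A$, $b_i\in B$, then transitivity of $R$ on $A$ gives $g$ with $a_1^{g}=a_2$, whence $b_2<a_2<b_1^{g}$ is a within-$B$ comparability, a contradiction. So all comparabilities point one way, the poset \emph{is} a Haar-poset $P_S$, and $\Aut(P_S)$ equals the bipartition-stabiliser of $\Haar(R,S)$ on the nose. The lower bound for the groups in Table~\ref{auxiliaryyy} is then immediate from Corollary~\ref{thrm:mainHRRplus} with no colouring refinement or extra computation needed: those groups lie in Table~\ref{auxiliaryy}, which by definition lists precisely the $R$ for which no $S$ makes the bipartition-stabiliser equal to $\hat R$.

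For the remaining small-group facts (three-orbit representations exist for $Q_8$, $C_3^2$, $C_2^4$; none exists for $C_2^2$, $C_2^3$; four-orbit ones do), the paper simply runs a computer search, whereas you lean on Babai's three-orbit theorem for most of the existence statements and propose ad hoc constructions for the rest. Both routes are fine; yours trades a little computation for a citation, but note that the non-existence of three-orbit representations for $C_2^2$ and $C_2^3$ is not something Babai's DRR argument gives directly (absence of a DRR does not by itself rule out a three-orbit poset), so you will still need a short direct check there.
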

\begin{proof}
Suppose that $R$ admits a Haar graph $\mathrm{Haar}(R,S)$ such that $\hat R$ is the stabilizer in $\mathrm{Aut}(\mathrm{Haar}(R,S))$ of the bipartition $\{R\times \{-1\},R\times \{1\}\}$. We can turn $\mathrm{Haar}(R,S)$ into a poset $P$ having underlying set $R\times\{-1,1\}$ by defining $(a,s)\le (b,t)$ if and only if $(a,s)=(b,t)$, or $s=-1$, $t=1$ and $\{(a,-1),(b,1)\}$ is an edge of $\mathrm{Haar}(R,S)$. Clearly, $\mathrm{Aut}(P)$ is the subgroup of $\mathrm{Aut}(\mathrm{Haar}(R,S))$ fixing the bipartition $\{R\times \{-1\},R\times \{1\}\}$. In particular, except when $|R|\le 2$, each of these groups satisfies part~\eqref{P4}. Moreover, when $|R|\le 2$, $R$ admits a poset semiregular representation with $1$ orbit. Therefore, in view of Corollary~\ref{thrm:mainHRRplus}, we may suppose that $R$ is one of the sixteen groups in Table~\ref{auxiliaryy}. We have investigated the remaining groups with the help of a computer.
\end{proof}
\begin{table}[!ht]
\begin{tabular}{c|c}\hline
Order&Group\\\hline
3&$C_3$ \\
4&$C_4$ \\
5&$C_5$\\
6&$C_6$ or $D_6$\\
7&$C_7$\\
8&$C_4\times C_2$, or $Q_8$, or $D_8$\\
9&$C_{3}^2$ \\
10&$D_{10}$\\
16&$C_{2}^4$ or $C_4\times C_2^2$ \\
32&$C_{2}^5$ \\\hline
\end{tabular}
\caption{Exceptional examples for Corollary~\ref{corollary}}\label{auxiliaryyy}
\end{table}

Following the proof of~\cite[Corollary~4.5]{Babai} (in which a bound of $8^{|R|}$ is achieved), we obtain the following related result on
distributive lattices.

\begin{corollary}\label{corollaryy}
Given a finite group $R$, there exists a distributive lattice $L$ with $\mathrm{Aut}(L)\cong R$ and with $|L|\le 2^{\frac{3|R|}{2}}$.
\end{corollary}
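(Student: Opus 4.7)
The plan is to follow the strategy of Babai's Corollary~4.5 (which obtained the bound $8^{|R|}=2^{3|R|}$ by applying Birkhoff's representation theorem to a semiregular poset representation on $3|R|$ points), but to substitute the bipartite Haar poset supplied by Corollary~\ref{thrm:mainHRRplus}, which lives on only $2|R|$ points, and to bound the number of order ideals by exploiting the regularity of the Haar graph. The saving in the exponent from $3|R|$ to $3|R|/2$ will come from the fact that every top vertex has a large lower neighbourhood, so the trivial bound $2^{2|R|}$ on order ideals can be improved.

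Excluding the groups in Table~\ref{auxiliaryy}, Corollary~\ref{thrm:mainHRRplus} furnishes a subset $S\subseteq R$ such that the stabiliser of the bipartition $\{R\times\{-1\},R\times\{1\}\}$ in $\Aut(\Haar(R,S))$ equals $\hat R$. The graph $\Haar(R,R\setminus S)$ is the bipartite complement of $\Haar(R,S)$ and has the same bipartition-preserving automorphisms, so we may replace $S$ by $R\setminus S$ at will; together with some flexibility in the constructions of Theorem~\ref{thrm:mainHRR}, this lets us arrange $|S|\ge|R|/2+1$. Turning $\Haar(R,S)$ into the bipartite poset $P$ on $R\times\{-1,1\}$ exactly as in the proof of Corollary~\ref{corollary} gives $\Aut(P)\cong R$, and Birkhoff's theorem then produces a distributive lattice $L=J(P)$ of order ideals of $P$ with $\Aut(L)\cong\Aut(P)\cong R$.

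Each order ideal of $P$ is determined by a subset $Y'\subseteq R\times\{1\}$ of top vertices together with a subset $X'\subseteq R\times\{-1\}$ of bottom vertices containing the lower neighbourhood $N(Y')$ of $Y'$ in $P$; hence
\[
|L|\;=\;\sum_{Y'\subseteq R\times\{1\}}2^{|R|-|N(Y')|}.
\]
The term $Y'=\emptyset$ contributes $2^{|R|}$, and for every non-empty $Y'$ the set $N(Y')$ has size at least $|S|$ because any single top vertex of the Haar graph has exactly $|S|$ lower covers. Consequently
\[
|L|\;\le\;2^{|R|}+(2^{|R|}-1)\cdot 2^{|R|-|S|},
\]
and the choice $|S|\ge|R|/2+1$ combined with $|R|\ge 2$ forces $|L|\le 2\cdot 2^{3|R|/2-1}=2^{3|R|/2}$. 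The small groups from Table~\ref{auxiliaryy} are settled by a direct construction with the help of a computer algebra system.

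The principal obstacle is guaranteeing the existence of a subset $S$ meeting the conclusion of Corollary~\ref{thrm:mainHRRplus} with $|S|\ge|R|/2+1$: complementation alone yields only $|S|\ge\lceil|R|/2\rceil$, which for $|R|$ even falls short by one element. Closing this gap requires either extracting a small amount of additional freedom from the constructions underlying Theorem~\ref{thrm:mainHRR}, or else performing a local modification of $P$ that preserves $\Aut(P)$ while keeping the count of order ideals within the target bound.
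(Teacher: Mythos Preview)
Your approach is essentially identical to the paper's: build the bipartite Haar poset, pass to its lattice of order ideals via Birkhoff, and bound the number of ideals by observing that every nonempty antichain of top vertices forces a large down-set. The paper resolves exactly the obstacle you flag by invoking Theorem~\ref{thrm:mainHRRRR} (the strengthened form of Theorem~\ref{thrm:mainHRR}) rather than Corollary~\ref{thrm:mainHRRplus}: that theorem guarantees $4\le |S|\le (|R|-6)/2$, so taking $T=R\setminus S$ gives $|T|\ge |R|/2+3$, comfortably exceeding your target of $|R|/2+1$ and yielding the slightly sharper count $|L|\le 2+2^{|R|}-1+(2^{|R|}-1)2^{|R|/2-3}\le 2^{3|R|/2}$.
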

\begin{proof}
We use Theorem~\ref{thrm:mainHRRRR}.
Suppose that $R$ satisfies either part~\eqref{thrm:mainHRRR11} or~\eqref{thrm:mainHRRR22} of Theorem~\ref{thrm:mainHRRRR}. In particular, there exists a subset $S$ of $R$ such that  $4\le |S|\le (|R|-6)/2$ and $\hat R$ is the stabilizer in $\mathrm{Aut}(\mathrm{Haar}(R,S))$ of the bipartition $\{R\times\{-1\},R\times\{1\}\}$. Let $T=R\setminus S$ and consider $\mathrm{Haar}(R,T)$. By Lemma~\ref{lem:bipartite complement}, $\hat R$ is the stabilizer in $\mathrm{Aut}(\mathrm{Haar}(R,T))$ of the bipartition $\{R\times\{-1\},R\times\{1\}\}$.

As in the proof of Corollary~\ref{corollary}, we turn $\mathrm{Haar}(R,T)$ into a poset $P$. The points in $P$ are the elements of $R\times\{-1,1\}$ where $(a,s)\le (b,t)$ if and only if $(a,s)=(b,t)$, or $s=-1$, $t=1$ and $\{(a,-1),(b,1)\}$ is an edge of $\mathrm{Haar}(R,T)$. Clearly, $\mathrm{Aut}(P)=\hat R$.

 Let $L$ be the lattice of ideals of $P$.\footnote{Recall that $I$ is an ideal of $P$ if, for each $a\in P$ and for each $b\in I$, $a\le b$ implies $a\in I$.} Clearly $L$ is distributive and by~\cite{birkhoff} we have $\mathrm{Aut}(L)\cong\mathrm{Aut}(P)\cong R$. Moreover,  we get 
 $$|L|\le 2+2^{|R|}-1+(2^{|R|}-1)2^{\frac{|R|}{2}-3}\le 2^{\frac{3|R|}{2}}.\footnote{Here, the summand $2$ accounts for the ideal generated by the empty set and by the whole of $L$. The summand $2^{|R|}-1$ accounts for an ideal generated by a subset of the layer $R\times\{-1\}$. Whereas, the summand $(2^{|R|}-1)2^{|R|/2-3}$ accounts for the remaining ideals and takes into account that each element in the layer $R\times\{1\}$ has at least $|R|/2+3$ elements below in the layer $R\times\{-1\}$.}$$ In particular, the proof follows  except when $R$ is a group in part~\eqref{thrm:mainHRRR33} or~\eqref{thrm:mainHRRR44} of Theorem~\ref{thrm:mainHRRRR}.

The veracity of the statement for the groups in part~\eqref{thrm:mainHRRR33} or~\eqref{thrm:mainHRRR44} of Theorem~\ref{thrm:mainHRRRR}  has been verified with the help of a computer: for each group we have verified the existence of a poset $P$ with $\mathrm{Aut}(P)\cong R$ and such that the lattice of ideals of $P$ has cardinality at most $2^{3|R|/2}$.
\end{proof}
Except for the base of the exponent, Corollary~\ref{corollaryy} is best possible. Indeed, by~\cite[Proposition~4.8]{Babai}, if $R$ is a cyclic group of prime order $p$ and $L$ is a distributive lattice admitting $R$ as group of automorphisms, then $|L|\ge 2^p$.

\subsection{Abundance of Haar graphical representations}
Alongside classifying groups that admit a regular representation, it is natural to ask how common these representations are among the class of combinatorial objects being studied.\footnote{Typically for this question, we consider the frequency of such objects among objects in the class whose automorphism group contains a fixed regular subgroup.} Accordingly, at around the same time that the classification of DRRs and GRRs was completed, Babai and Godsil~\cite[Conjecture~3.13]{BaGo} conjectured that
the proportion of subsets $S$ of $R$ such that $\mathrm{Cay}(R,S)$ is a $\mathrm{DRR}$ goes to $1$ as $|R|\to\infty$, that is,
$$\lim_{|R|\to\infty}\frac{|\{S\subseteq R\mid \mathrm{Cay}(R,S) \hbox{ is a DRR}\}|}{2^{|R|}}=1.$$ This conjecture was proved only recently in~\cite{MorrisSpiga} for digraphs. The methods from~\cite{MorrisSpiga} were then improved by Xia and Zheng~\cite{XiaZheng} to prove the analogous conjecture for graphs, see~\cite{XiaZheng} for more details. Using the ideas in~\cite{MorrisSpiga}, together with Gan and Xia, the second author has just completed an asymptotic enumeration of Haar graphical representations \cite{Gan}. Indeed,
$$\lim_{\substack{R \hbox{ non-abelian}\\|R|\to\infty}}\frac{|\{S\subseteq R\mid \mathrm{Haar}(R,S) \hbox{ is a HGR}\}|}{2^{|R|}}=1.$$
Although valuable and interesting in their own context, the results in~\cite{Gan} are non-constructive and not sufficient to give much insight into the existence of HGRs; in fact, they only prove the existence of HGRs over non-abelian groups, when $|R|\ge 2^{83}$.

\section{Elementary results}

In this section, we present some relatively elementary facts about the structure of Haar graphs. These will be important in our main results. 

The statement of Theorem~\ref{thrm:mainHRR} and its corollaries hinted at the importance of understanding the subgroup that stabilises the natural bipartition within the automorphism group of a Haar graph. Indeed, as this is an index-2 subgroup of the automorphism group, its importance to the overall group structure is not surprising. Since this subgroup will arise often in our proofs, we introduce notation for it.

 Given a Haar graph $\mathrm{Haar}(R,S)$, we let $\mathrm{Aut}(\mathrm{Haar}(R,S))_0$ be the subgroup of the automorphism group $\mathrm{Aut}(\mathrm{Haar}(R,S))$ fixing the natural bipartition $\{R\times\{-1\},R\times\{1\}\}$ of the vertex set of $\mathrm{Haar}(R,S)$. Clearly, $\hat R\le \mathrm{Aut}(\mathrm{Haar}(R,S))_0$.

In order to have some control over the cardinality of $|S|$ in $\Haar(R,S)$, we will sometimes want the option of working with the bipartite complement of $\Haar(R,S)$ rather than with $\Haar(R,S)$ itself. The following lemma shows that the bipartite complement is also a Haar graph, and for our purposes will always have the same automorphism group.\footnote{We actually show only that the subgroups consisting of automorphisms that fix the natural bipartition are equal, but in connected Haar graphs this is equivalent to the automorphism groups being equal.}

\begin{lemma}\label{lem:bipartite complement}
The bipartite complement of the Haar graph $\Haar(R,S)$ with respect to the bipartition $\{R\times\{-1\},R\times\{1\}\}$  is the Haar graph 
$\mathrm{Haar}(R,R\setminus S)$. 

Furthermore, $\Aut(\mathrm{Haar}(R,S))_0=\Aut(\mathrm{Haar}(R,R\setminus S))_0$.
\end{lemma}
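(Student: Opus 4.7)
The plan is to verify the two assertions essentially from the definitions.

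For the first assertion, I would unpack the definition of the bipartite complement with respect to $\{R\times\{-1\},R\times\{1\}\}$: it is the bipartite graph on the same vertex set in which $(g,-1)$ is adjacent to $(h,1)$ precisely when $(g,-1)$ and $(h,1)$ are non-adjacent in $\Haar(R,S)$. By the definition of $\Haar(R,S)$, this happens if and only if $hg^{-1}\notin S$, that is, $hg^{-1}\in R\setminus S$. Comparing with the definition of $\Haar(R,R\setminus S)$ gives the equality.

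For the second assertion, I would argue that any permutation of $R\times\{-1,1\}$ that fixes each of the sets $R\times\{-1\}$ and $R\times\{1\}$ is an automorphism of $\Haar(R,S)$ if and only if it preserves the edge relation between the two parts; but preserving this relation is equivalent to preserving its complement inside $(R\times\{-1\})\times(R\times\{1\})$. By the first assertion, the latter is exactly the edge relation of $\Haar(R,R\setminus S)$ between the two parts. Hence the two stabilisers $\Aut(\Haar(R,S))_0$ and $\Aut(\Haar(R,R\setminus S))_0$ consist of the same permutations.

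Neither step poses a real obstacle: the whole statement is a translation between two equivalent descriptions of the same set of edges. The only subtlety worth flagging is that the equality of the two groups is stated at the level of the bipartition-preserving subgroups; a permutation that swaps the two parts need not be treated, which is why the short argument goes through without any hypothesis on $S$ or on connectedness.
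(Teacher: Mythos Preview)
Your proposal is correct and follows essentially the same approach as the paper: both the first assertion and the ``furthermore'' are obtained directly from the definitions, by observing that a bipartition-preserving permutation preserves the cross-part edge relation if and only if it preserves its complement. Your additional remark about why the equality is stated only for the bipartition-preserving subgroups is a welcome clarification.
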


\begin{proof}
The first part of the lemma follows directly from the definition of Haar graph.

The ``furthermore" is equally clear. We can partition the pairs of vertices of $\Haar(R,S)$ into three sets: those pairs $\{(r_1,i),(r_2,i)\}$ where $i \in \{-1,1\}$ and $r_1, r_2 \in R$; those pairs $\{(r_1,-1),(r_2,1)\}$ for which $r_1, r_2 \in R$ and $r_2r_1^{-1} \in S$ (these are joined by an edge in $\Haar(R,S)$); and those pairs $\{(r_1,-1),(r_2,1)\}$ for which $r_1, r_2 \in R$ and $r_2r_1^{-1} \notin S$ (these are not joined by an edge in $\Haar(R,S)$).

If an automorphism fixes $R\times \{-1\}$ and $R\times \{1\}$ then it certainly fixes (setwise) the first of these three sets of pairs. If it is an 
 element of $\Aut(\mathrm{Haar}(R,S))$ then it must also preserve (setwise) the second set of pairs, and the third set of pairs. But the third set of pairs are exactly the edges of $\Haar(R, R\setminus S)$.
\end{proof}

When $\mathrm{Haar}(R,S)$ is connected, every automorphism preserves the natural bipartition\footnote{Preserving the bipartition includes the possibility that it exchanges $R\times \{-1\}$ with $R\times \{1\}$.}. We will always want to construct connected Haar graphs, so that the natural bipartition is preserved. This is an enormously important restriction in order to limit the permutations that could be graph automorphisms. It is therefore essential to be able to determine whether or not $\Haar(R,S)$ is connected directly and easily when we know $S$.

\begin{lemma}\label{connection2}
Let $S$ be a subset of the finite group $R$. Then $\mathrm{Haar}(R,S)$ is connected if and only if $S^{-1}S=\{s^{-1}s'\mid s,s'\in S\}$ generates $R$. 
\end{lemma}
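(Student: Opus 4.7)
The plan is to describe the connected component of a single vertex explicitly, and in particular to identify the subgroup of $R$ that governs it. I would work with the component containing $(e,-1)$, where $e$ is the identity of $R$, and track the first coordinate along a walk: passing from $(x,-1)$ to a neighbour left-multiplies the first coordinate by some $s\in S$, yielding $(sx,1)$, while passing from $(y,1)$ to a neighbour left-multiplies by some $s^{-1}$ with $s\in S$, yielding $(s^{-1}y,-1)$. It follows that the first coordinate reached by a walk of even length starting at $(e,-1)$ is a product of factors of the form $s^{-1}s'$ with $s,s'\in S$, hence lies in the subgroup $H:=\langle S^{-1}S\rangle$; after an odd number of steps it lies in $SH$.

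With this observation, the proof splits into two inclusions. First, a straightforward induction on walk length shows the connected component of $(e,-1)$ is contained in $(H\times\{-1\})\cup(SH\times\{1\})$. For the reverse inclusion, I would use that $S^{-1}S$ is closed under inversion to write any $h\in H$ as a product $(s_k^{-1}s_k')\cdots(s_1^{-1}s_1')$ of elements of $S^{-1}S$, and then exhibit the explicit walk
\[
(e,-1)\to(s_1',1)\to(s_1^{-1}s_1',-1)\to(s_2's_1^{-1}s_1',1)\to\cdots\to(h,-1)
\]
of length $2k$; one additional step reaches any $(sh,1)\in SH\times\{1\}$.

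At this point one needs only to recognise that $SH$ is a single right coset of $H$. Assuming $S\ne\emptyset$ (otherwise $\Haar(R,S)$ has no edges and the claim is clear), I would fix any $s_0\in S$ and note that $s_0^{-1}s\in S^{-1}S\subseteq H$ for every $s\in S$, whence $S\subseteq s_0H$ and therefore $SH=s_0H$. Consequently the connected component of $(e,-1)$ equals $(H\times\{-1\})\cup(s_0H\times\{1\})$, and $\Haar(R,S)$ is connected if and only if this equals $R\times\{-1,1\}$, which happens precisely when $H=R$.

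I do not foresee a serious obstacle: the argument is essentially walk-tracking. The only piece that requires minor care is the reduction from $SH$ to the single coset $s_0H$, which is what rules out the $R\times\{+1\}$ side of the bipartition spuriously being fully reached when the $R\times\{-1\}$ side is not.
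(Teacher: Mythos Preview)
Your proposal is correct and follows essentially the same walk-tracking approach as the paper: both arguments determine the connected component of $(1,-1)$ by observing that two steps in the graph left-multiply the first coordinate by an element of $S^{-1}S$, so that the vertices reachable in $R\times\{-1\}$ are exactly $\langle S^{-1}S\rangle\times\{-1\}$. Your version is somewhat more careful than the paper's in that you explicitly handle the $R\times\{1\}$ side via the observation $SH=s_0H$ and you treat the degenerate case $S=\emptyset$, whereas the paper's proof leaves these implicit.
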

\begin{proof}
The neighbours of $(1,-1)$ in $\mathrm{Haar}(R,S)$ are the elements of $S\times\{1\}$, and the vertices at distance two from $(1,-1)$ are the elements in $S^{-1}S\times\{-1\}$. 
In general, arguing by induction, we see that the elements at distance $2\ell$ from $(1,-1)$ are the vertices $(x,-1)$ where
$$x\in (S^{-1}S)^\ell=\{s_1^{-1}s_1'\cdots s_\ell'^{-1}s_\ell'\mid s_1,s_1',\ldots,s_\ell,s_\ell'\in S\}.$$
This shows that $\mathrm{Haar}(R,S)$ is connected if and only if $(S^{-1}S)^\ell=R$, for some $\ell\in \mathbb{N}$. In turn, this happens if and only if $S^{-1}S$ generates $R$.
\end{proof}

Sometimes we may not be given $S$ explicitly. Our next result tells us that in general, if $\Haar(R,S)$ is disconnected then there are  automorphisms of $\Haar(R,S)$ that fix the natural bipartition and are not elements of $\hat R$. We will primarily use this inductively. If $N \le R$ then $\Haar(N, S\cap N)$ appears as an induced subgraph of $\Haar(R,S)$ on the vertices $Nr\times \{-1,1\}$ for any $r \in R$. If we are able to assume inductively that $\Aut(\Haar(N, S\cap N))_0=\hat N$ (and $|N| \ge 3$), then Lemma~\ref{lem:disconnected} tells us that all of these subgraphs are connected, making it possible to deduce that $\Haar(R,S)$ is connected without explicit information about $S\cap N$.

\begin{lemma}\label{lem:disconnected}
If $|R|\ge 3$ and $\Haar(R,S)$ or its complement is disconnected, then $$\Aut(\Haar(R,S))_0>\hat R.$$ 
\end{lemma}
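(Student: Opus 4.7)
The plan is to use Lemma~\ref{lem:bipartite complement} to replace $S$ by $R\setminus S$ if needed, so that we may assume $\Haar(R,S)$ itself is disconnected; the two graphs have the same bipartition-preserving automorphism group. By Lemma~\ref{connection2} the subgroup $N:=\langle S^{-1}S\rangle$ is then a proper subgroup of $R$. A short BFS argument from $(1,-1)$, in the spirit of the proof of Lemma~\ref{connection2}, identifies the component of $(1,-1)$ as $N\times\{-1\}\cup sN\times\{1\}$ for any $s\in S$ (or as a single isolated vertex if $S=\emptyset$); the right regular action of $\hat R$ permutes these connected components transitively, giving $[R:N]\ge 2$ of them, in bijection with the right cosets of $N$.

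The construction of $\sigma\in\Aut(\Haar(R,S))_0\setminus\hat R$ exploits that distinct components cannot have any edges between them. If $[R:N]\ge 3$, I would fix three distinct components $C_0,C_1,C_2$ and any bipartition-preserving graph isomorphism $\psi:C_0\to C_1$ (for instance, one obtained from the restriction of a suitable $\rho_g$), and define $\sigma$ to act as $\psi$ on $C_0$, as $\psi^{-1}$ on $C_1$, and as the identity on every other component. This is a bipartition-preserving graph automorphism, and it cannot equal any $\rho_g$: being the identity on $C_2$ together with the freeness of $\hat R$ on vertices would force $g=1$, contradicting $\sigma|_{C_0}=\psi\ne\mathrm{id}$.

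When $[R:N]=2$ the subgroup $N$ is normal in $R$, so every $\rho_n$ with $n\in N$ fixes each of the two components setwise. Since $|R|\ge 3$ and $|R|=2|N|$, we have $|N|\ge 2$, so I may pick a non-trivial $n\in N$ and define $\sigma$ to act as $\rho_n$ on $C_0$ and as the identity on $C_1$. The same freeness argument applied to any vertex of $C_1$ shows $\sigma\notin\hat R$.

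The only thing to verify in each case is that the piecewise-defined $\sigma$ is actually a graph automorphism, but this is immediate: no edges run between distinct components, and on each component $\sigma$ restricts to either a bona fide graph isomorphism or the identity. Beyond this routine bookkeeping I do not expect any deeper difficulty; disconnectedness supplies precisely the freedom—namely, to act on separate components independently—that the regular action of $\hat R$ on each bipartition class cannot imitate.
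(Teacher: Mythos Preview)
Your argument is correct and rests on the same idea as the paper's: build an automorphism that acts differently on different connected components, then use the freeness of $\hat R$ to see it cannot lie in $\hat R$. The organizational details differ, however. The paper also passes to whichever of the two graphs is disconnected and observes that the intersections of the components with $R\times\{\varepsilon\}$ are right cosets of a subgroup $R_1$ (your $N$), but it splits on $|R_1|$ rather than on $[R:N]$: when $|R_1|>1$ it lets $\hat{R_1}$ act on a single component while fixing every other vertex, producing a nontrivial point stabiliser in $\Aut(\Haar(R,S))_0$; when $|R_1|=1$ the graph is empty or a perfect matching and the conclusion is immediate since $|R|\ge 3$. Your component-swap for $[R:N]\ge 3$ is thus an additional construction the paper avoids, since its ``act by $\rho_n$ on one component, identity elsewhere'' already covers every case with $|N|>1$ regardless of the number of components. (Incidentally, the normality of $N$ you invoke when $[R:N]=2$ is not actually needed: $\rho_n$ with $n\in N$ preserves $N\times\{-1\}\cup sN\times\{1\}$ simply because $Nn=N$ and $sNn=sN$.) On the other hand, your explicit identification of $N=\langle S^{-1}S\rangle$ via Lemma~\ref{connection2} and the reduction through Lemma~\ref{lem:bipartite complement} make the structure more transparent than the paper's abstract block argument.
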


\begin{proof}
We assume that $\Haar(R,S)$ or its complement is disconnected and we let $C_1,\ldots,C_\ell$ be the connected components. For each $\varepsilon\in \{-1,1\}$, the sets $$C_1\cap (R\times\{\varepsilon\}),\ldots, C_\ell\cap (R\times\{\varepsilon\})$$ are blocks of imprimitivity for the transitive action of $\mathrm{Aut}(\mathrm{Haar}(R,S))_0$ on $R\times\{\varepsilon\}$. As $\hat R\le \Aut(\Haar(R,S))_0$ and as $\hat R$ acts regularly on $R\times\{\varepsilon\}$, these blocks are right cosets of some subgroup $R_1$ of $R$. We consider two possibilities: $|R_1|=1$, or $|R_1|>1$.

If $|R_1|=1$, then each connected component $C_\kappa$ is either a single vertex, or a pair of adjacent vertices, one from each set of the bipartition $\{R\times\{-1\},R\times\{1\}\}$. Thus $\Haar(R,S)$ or its complement is either an empty graph, or a perfect matching. As $|R|\ge 3$, from this the proof immediately follows.

If $|R_1|>1$, then we can let $\hat R_1$ act semiregularly on the vertices of one connected component, while fixing every other vertex. This subgroup is in $\Aut(\Haar(R,S))_0$,  giving a non-identity element of a point stabiliser of $\Aut(\Haar(R,S))_0$.
\end{proof}

%
%
%
%
%

Lemma~\ref{lem:done if nbrs fixed} will help to avoid some repetitions in our inductive arguments. 

\begin{lemma}\label{lem:done if nbrs fixed}
If $\mathrm{Haar}(R,S)$ is connected, $\Aut(\Haar(R,S))_{(1,-1)}$ fixes  $S \times \{1\}$ pointwise, and $\Aut(\Haar(R,S))_{(1,1)}$ fixes  $S^{-1} \times \{-1\}$ pointwise, then $\Aut(\Haar(R,S))_0=\hat R$.
\end{lemma}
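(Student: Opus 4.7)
The plan is to show that the stabiliser $H := \Aut(\Haar(R,S))_{(1,-1)}$ is trivial. Because $(1,-1)$ is fixed, $H$ cannot swap the two parts of the bipartition, so $H \le \Aut(\Haar(R,S))_0$. Since $\hat R \le \Aut(\Haar(R,S))_0$ acts transitively on $R \times \{-1\}$, an orbit--stabiliser count then forces $|\Aut(\Haar(R,S))_0| = |R|\cdot|H| = |R| = |\hat R|$, and hence $\Aut(\Haar(R,S))_0 = \hat R$.

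To prove $H = 1$, I would propagate pointwise fixing through the graph using connectedness and translation by $\hat R$. By hypothesis, $H$ fixes the neighbourhood $S\times\{1\}$ of $(1,-1)$ pointwise; in particular $H \le \Aut(\Haar(R,S))_{(s,1)}$ for each $s \in S$. The key observation is that conjugation by $\rho_s$ maps $(1,1) \mapsto (s,1)$ and sends the neighbourhood $S^{-1}\times\{-1\}$ of $(1,1)$ onto the neighbourhood $S^{-1}s\times\{-1\}$ of $(s,1)$; so the second hypothesis, transferred along $\rho_s$, says that $\Aut(\Haar(R,S))_{(s,1)}$ fixes $S^{-1}s\times\{-1\}$ pointwise. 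Taking the union over $s \in S$, $H$ fixes $S^{-1}S\times\{-1\}$ pointwise.

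Now I iterate, alternating sides of the bipartition. For each $r \in S^{-1}S$ fixed by $H$, conjugating the first hypothesis by $\rho_r$ shows that $\Aut(\Haar(R,S))_{(r,-1)}$ fixes its neighbourhood $Sr\times\{1\}$ pointwise; consequently $H$ fixes $SS^{-1}S\times\{1\}$ pointwise. Repeating, $H$ fixes pointwise the set of vertices at graph distance at most $2\ell$ (respectively $2\ell+1$) from $(1,-1)$, namely $(S^{-1}S)^{\ell}\times\{-1\}$ and $S(S^{-1}S)^{\ell}\times\{1\}$, for every $\ell \ge 0$. Since $\Haar(R,S)$ is assumed connected, every vertex is eventually reached, so $H$ fixes every vertex and is trivial, completing the proof.

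The only obstacle is organisational: keeping the two hypotheses and the two sides of the bipartition in step throughout the induction. Both translation arguments are symmetric, and once the base cases at $(1,-1)$ and $(1,1)$ are transported by $\hat R$ to arbitrary vertices, the induction on distance runs routinely because connectedness guarantees that every vertex lies in some $(S^{-1}S)^{\ell}\times\{-1\}$ or $S(S^{-1}S)^{\ell}\times\{1\}$ (this last fact being exactly what underlies Lemma~\ref{connection2}).
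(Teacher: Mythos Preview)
Your proof is correct and follows essentially the same approach as the paper's: both transfer the two hypotheses from $(1,-1)$ and $(1,1)$ to arbitrary vertices via conjugation by $\hat R$, then propagate pointwise fixing through the graph using connectedness. The paper phrases this more compactly by first noting that the stabiliser of \emph{every} vertex fixes its neighbourhood pointwise and then running a single connectedness induction, whereas you alternate sides explicitly and close with an orbit--stabiliser count rather than multiplying by an element of $\hat R$; these are organisational differences only.
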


\begin{proof}
Observe that the hypothesis implies that, for every vertex $(r,\varepsilon)\in R\times\{-1,1\}$, $\mathrm{Aut}(\mathrm{Haar}(R,S))_{(r,\varepsilon)}$ fixes  the neighbourhood of $(r,\varepsilon)$ pointwise.

Let $g \in \Aut(\Haar(R,S))_0$. Replacing $g$ with $g\rho_{r^{-1}}$ for some $\rho_{r^{-1}}\in \hat R$, we may suppose that $g$ fixes $(1,-1)$. As $g$ fixes $(1,-1)$, $g$ fixes the neighbourhood of $(1,-1)$ pointwise. Now an inductive argument, pivoting on the connectedness of $\mathrm{Haar}(R,S)$, implies that $g$ is the identity permutation.
\end{proof}

The following result will also be important in our inductive arguments. This shows that from a graph automorphism of $\Haar(R,S)$ that exchanges the parts of the bipartition, we can find a closely-related group automorphism of $R$. This has implications for the relations among elements of $R$.

\begin{lemma}\label{group automorphism}
Suppose $\mathrm{Haar}(R,S)$ is connected, $\mathrm{Aut}(\mathrm{Haar}(R,S))_0=\hat R$, and there exists $\varphi\in\mathrm{Aut}(\mathrm{Haar}(R,S))$ with $(1,-1)^\varphi=(1,1)$. Then there exists $\hat \varphi\in\mathrm{Aut}(R)$ and $x\in R$ with
$(r,-1)^\varphi=(r^{{\hat \varphi}},1)$ and $(r,1)^\varphi=(r^{{\hat \varphi}^{-1}}x,-1)$, $\forall r\in R$.
\end{lemma}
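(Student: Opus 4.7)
The plan is to deduce $\hat\varphi$ from the fact that $\varphi$ normalises $\hat R$, and then extract $x$ from $\varphi^2$.

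First I would observe that, since $\mathrm{Haar}(R,S)$ is connected and bipartite, every automorphism either preserves or swaps the two parts $R\times\{-1\}$ and $R\times\{1\}$. Because $(1,-1)^\varphi=(1,1)$, the automorphism $\varphi$ swaps the two parts. In particular $\varphi^2$ preserves the bipartition, so $\varphi^2\in\mathrm{Aut}(\mathrm{Haar}(R,S))_0=\hat R$, say $\varphi^2=\rho_y$ for some $y\in R$. Next I would show that $\varphi$ normalises $\hat R$: the conjugate $\varphi^{-1}\hat R\varphi$ is a subgroup of $\mathrm{Aut}(\mathrm{Haar}(R,S))$ of the same order as $\hat R$, and since $\hat R$ fixes each bipartition part setwise and $\varphi$ swaps them twice under conjugation, $\varphi^{-1}\hat R\varphi$ also fixes each part setwise; hence $\varphi^{-1}\hat R\varphi\le \mathrm{Aut}(\mathrm{Haar}(R,S))_0=\hat R$, and equality follows by cardinality.

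Conjugation by $\varphi$ therefore induces an automorphism of $\hat R$, which via the natural isomorphism $\hat R\cong R$ yields an automorphism $\hat\varphi\in\mathrm{Aut}(R)$ characterised by $\varphi^{-1}\rho_g\varphi=\rho_{g^{\hat\varphi}}$ for every $g\in R$. To identify $\hat\varphi$ concretely, I would evaluate both sides of $\rho_g\varphi=\varphi\rho_{g^{\hat\varphi}}$ at the vertex $(1,-1)$: the left-hand side gives $(g,-1)^\varphi$, while the right-hand side gives $(1,1)^{\rho_{g^{\hat\varphi}}}=(g^{\hat\varphi},1)$. This proves the first formula $(r,-1)^\varphi=(r^{\hat\varphi},1)$ for every $r\in R$.

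For the second formula I would use $\varphi^2=\rho_y$. Given $r\in R$, write $(r,1)^\varphi=(s,-1)$ and apply $\varphi$ once more: on the one hand $(r,1)^{\varphi^2}=(ry,1)$, and on the other hand $(s,-1)^\varphi=(s^{\hat\varphi},1)$ by the first formula. Hence $s^{\hat\varphi}=ry$, i.e.\ $s=r^{\hat\varphi^{-1}}y^{\hat\varphi^{-1}}$, and setting $x:=y^{\hat\varphi^{-1}}$ gives the desired expression $(r,1)^\varphi=(r^{\hat\varphi^{-1}}x,-1)$. The main point to be careful with is the normalisation step, since the equality $\mathrm{Aut}(\mathrm{Haar}(R,S))_0=\hat R$ is what forces $\varphi$ to conjugate $\hat R$ back into itself and hence produce a genuine group automorphism; the rest is a bookkeeping calculation on the two bipartite parts.
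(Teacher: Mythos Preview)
Your proof is correct and follows essentially the same approach as the paper. Both arguments hinge on showing that $\varphi$ normalises $\hat R$ (using $\mathrm{Aut}(\mathrm{Haar}(R,S))_0=\hat R$), extracting the group automorphism $\hat\varphi$ from this conjugation, and then reading off the second formula from $\varphi^2\in\hat R$; the only cosmetic difference is that the paper verifies $\varphi^{-1}\rho_r\varphi=\rho_{r^{\hat\varphi}}$ element by element (defining $\hat\varphi$ first as a set map via the action on $R\times\{-1\}$ and then showing it is a homomorphism), whereas you establish the normalisation at the subgroup level and then identify $\hat\varphi$.
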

\begin{proof}
Since $\mathrm{Haar}(R,S)$  is connected and since $(1,-1)^\varphi=(1,1)$, we deduce that $\varphi$ maps $R\times\{-1\}$ to $R\times\{1\}$. We let $\hat \varphi:R\to R$ be the mapping such that $(r,-1)^\varphi=(r^{\hat \varphi},1)$, $\forall r\in R$.

Let $r\in R$ and consider $g=\varphi^{-1}(r,r)\varphi((r^{\hat \varphi})^{-1},(r^{\hat \varphi})^{-1})\in \mathrm{Aut}(\mathrm{Haar}(R,S))$. As $g$ fixes the bipartition $\{R\times\{-1\},R\times\{1\}\}$, we get $g\in \mathrm{Aut}(\mathrm{Haar}(R,S))_0=\hat R$. Moreover, as 
\begin{align*}
(1,1)^g&=
(1,1)^{\varphi^{-1}(r,r)\varphi((r^{\hat \varphi})^{-1},(r^{\hat \varphi})^{-1})}=(1,-1)^{(r,r)\varphi((r^{\hat \varphi})^{-1},(r^{\hat \varphi})^{-1})}\\
&=(r,-1)^{\varphi((r^{\hat \varphi})^{-1},(r^{\hat \varphi})^{-1})}=(r^{\hat \varphi},1)^{((r^{\hat \varphi})^{-1},(r^{\hat \varphi})^{-1})}=(1,1),
\end{align*}
we deduce that $g$ is the identity permutation. Therefore, $\varphi^{-1}(r,r)\varphi=(r^{\hat \varphi},r^{\hat \varphi})$ and $\hat\varphi\in\mathrm{Aut}(R)$. This shows $(r,-1)^\varphi=(r^{\hat \varphi},-1)$, $\forall r\in R$.

Let $x\in R$ with $(1,1)^\varphi=(x,-1)$. As $\varphi^2\in\mathrm{Aut}(\mathrm{Haar}(R,S))_0=\hat R$, we deduce $\varphi^2=(x,x)\in \hat R$. We now have 
\begin{align*}
(r,1)^\varphi&=(1,1)^{(r,r)\varphi}=(1,-1)^{\varphi (r,r)\varphi}=(1,-1)^{\varphi(r,r)\varphi^{-1}\cdot \varphi^2}\\
&=(1,-1)^{(r^{{\hat \varphi}^{-1}},r^{{\hat \varphi}^{-1}})(x,x)}=(r^{{\hat\varphi}^{-1}},-1)^{(x,x)}=(r^{{\hat\varphi}^{-1}}x,-1).
\end{align*}
\end{proof}

Although the final result we present here is purely group-theoretic, as it is essentially brief background material that will be  necessary to our main arguments, we include it in this section. 
\begin{lemma}\label{lemma:Rsimple}
Let $R$ be a non-abelian simple group. Then one of the following holds
\begin{enumerate}
\item\label{eq:Rsimple1} there exist $s,t\in R$  with $R=\langle s,t\rangle$ and ${\bf o}(s)\ge 12$, 
\item\label{eq:Rsimple2} $R$ is isomorphic to $\mathrm{Alt}(5)$, $\mathrm{Alt}(6)$, $\mathrm{Alt}(7)$,  $\mathrm{PSL}_2(7)$, $\mathrm{PSL}_2(8)$, $\mathrm{PSL}_2(11)$, $\mathrm{PSL}_3(4)$, $\mathrm{PSU}_3(5)$, $M_{11}$, $M_{12}$, or $M_{22}$.
\end{enumerate}
In all cases, there exist $s,t\in R$  with $R=\langle s,t\rangle$ and ${\bf o}(s)\ge 5$.
\end{lemma}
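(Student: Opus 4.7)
The strategy I would follow is to apply the Classification of Finite Simple Groups, sort the simple groups by their maximum element order, and then invoke a strong generation theorem to produce the pair $(s,t)$. First I would verify that the eleven groups in part~(\ref{eq:Rsimple2}) are precisely the non-abelian finite simple groups whose maximum element order is strictly less than $12$. For the sporadic groups this is immediate from the \textsc{Atlas} element-order spectra. For alternating groups $\Alt(n)$ with $n \ge 8$, the permutation $(1,2,3,4,5)(6,7,8)$ has order $15$, and analogous products of disjoint cycles produce elements of order at least $12$ in every larger alternating group. For groups of Lie type, $\PSL_2(q)$ contains elements of orders $p$, $(q-1)/\gcd(2,q-1)$, and $(q+1)/\gcd(2,q-1)$, so the maximum element order is at least $12$ once $q \ge 13$; the remaining classical and exceptional families over small fields constitute a finite check.

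Once this inventory of the exceptions is in hand, for any non-abelian simple $R$ not appearing in the list I would pick some $s \in R$ with ${\bf o}(s) \ge 12$ and apply the Guralnick--Kantor theorem: every non-abelian finite simple group is $\frac{3}{2}$-generated, that is, every non-identity element lies in some generating pair. This immediately yields $t \in R$ with $R = \langle s,t \rangle$, establishing part~(\ref{eq:Rsimple1}).

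The concluding ``in all cases'' assertion is handled the same way. When $R$ falls under~(\ref{eq:Rsimple1}) there is nothing to prove since $12 > 5$. Otherwise $R$ is one of the eleven exceptions, each of which contains an element of order at least $5$ (the minimum maximum element order among the exceptions is $5$, attained by $\Alt(5)$ and by $\Alt(6) \cong \PSL_2(9)$), and one further application of $\frac{3}{2}$-generation delivers the required pair. The main technical obstacle is the element-order inventory for the classical and exceptional groups of Lie type: one needs to be confident that no further exception slips through outside the listed eleven. This is manageable using standard references on element orders in simple groups, but is where the bulk of the case analysis sits; the remainder of the proof then reduces to a single invocation of a well-established deep theorem.
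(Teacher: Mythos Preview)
Your proposal is correct and follows essentially the same approach as the paper: invoke Guralnick--Kantor $\frac{3}{2}$-generation to reduce the problem to determining which non-abelian simple groups have maximum element order below $12$, then run through the CFSG families (alternating via an explicit $15$-element in $\Alt(8)$, sporadic via the \textsc{Atlas}, Lie type via tabulated element-order data). The paper cites Kantor--Seress for the Lie-type inventory rather than arguing directly with $\PSL_2(q)$ as you do, and it leaves the final ``in all cases'' sentence implicit, but the substance is the same.
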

\begin{proof}
In~\cite{GuralnickKantor}, it is proved that every non-identity element of a finite non-abelian simple group belongs to a pair of generating elements. From this, we see that part~\eqref{eq:Rsimple1} is satisfied as long as $R$ contains an element having order at least $12$. Therefore, suppose that the maximal order of an element of $R$ is less than $12$.

We use the Classification of the Finite Simple Groups. If $R$ is an alternating group, then part~\eqref{eq:Rsimple2} immediately follows because $(1\,2\,3)(4\,5\,6\,7\,8)$ has order $15$ and lies in $\mathrm{Alt}(8)$. If $R$ is a sporadic simple group, then by~\cite{atlas} $R$ contains an element of order at least $12$, except when $R$ is $M_{11}$, $M_{12}$ or $M_{22}$. When $R$ is a simple group of Lie type, the maximal order of an element is tabulated in~\cite{KantorSeress} and we see that also in this case part~\eqref{eq:Rsimple2} is satisfied.\footnote{The work of Kantor and Seress gives a complete description of the maximal element order of all simple groups of Lie type, except for $\mathrm{Sp}_n(q)$, $\Omega_n^+(q)$ and $\Omega_n^-(q)$ with $q$ even. Nevertheless, the bounds obtained in~\cite{KantorSeress} are sufficient for our purposes. Incidentally, the maximal element order in even characteristic symplectic groups was obtained in~\cite{spiga1}.}
\end{proof}
With this, we are ready to present our constructions.

\section{Explicit constructions for cyclic and $2$-generated groups}\label{constructions}

In this section we provide the base cases for the inductive argument that follows. More precisely, we provide a construction on many groups that are cyclic or 2-generated, for a Haar graph that is an HGR if $R$ is nonabelian, and whose automorphism group is $\hat R \rtimes \langle \iota\rangle$ if $R$ is abelian.

We begin with a construction for a Haar graph on any cyclic group that has a very small automorphism group.

Whenever $i \ge 1$,  $\mathbf{o}(s) \ge \min\{9,i+6\}$, and $R=\langle s \rangle$, it is possible to prove that $\Aut(\Haar(R, \{1,s,\ldots, s^i,s^{i+2}\})_0=\hat R$. However, for our proof we only need to establish this in one case, when $i=6$.

\begin{lemma}\label{cyclic2}
Let $s \in R$ with $\mathbf{o}(s) \ge 12$, and let $S=\{s^i: 0 \le i \le 6\} \cup \{s^8\}$. Then $\Aut(\Haar(R,S))=\hat R$.
\end{lemma}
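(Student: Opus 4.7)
The natural reading of the conclusion is $\Aut(\Haar(R,S))_0=\hat R$ (when $R=\langle s\rangle$ is cyclic, the bipartition-swapping map $\iota$ of~\eqref{iota} is automatically an automorphism, so the literal equality $\Aut=\hat R$ cannot hold; the arguments in this section consistently concern $\Aut_0$). The plan is to apply Lemma~\ref{lem:done if nbrs fixed}. Write $n=\mathbf{o}(s)\ge 12$ and $I=\{0,1,2,3,4,5,6,8\}$, so $S=\{s^i:i\in I\}$; I may restrict to the case $R=\langle s\rangle$, since otherwise $\Haar(R,S)$ decomposes into $[R:\langle s\rangle]$ copies of $\Haar(\langle s\rangle,S)$ and Lemma~\ref{lem:disconnected} already enlarges $\Aut_0$ beyond $\hat R$. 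The graph $\Haar(\langle s\rangle,S)$ is then connected by Lemma~\ref{connection2}, since $s=1^{-1}\cdot s\in S^{-1}S$ generates $\langle s\rangle$.

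The core work is showing that $H:=\Aut(\Haar(R,S))_{(1,-1)}$ fixes $N((1,-1))=S\times\{1\}$ pointwise. A direct computation in the cyclic group gives, for $a,b\in I$,
\[
|N((s^a,1))\cap N((s^b,1))|=C(a-b),\qquad C(d):=\bigl|\{(i,j)\in I\times I : i-j\equiv d\pmod n\}\bigr|,
\]
so each vertex $(s^a,1)$ carries the $H$-invariant multiset fingerprint $F(a):=\{C(a-b):b\in I\setminus\{a\}\}$. I plan to show that $F$, possibly together with a single secondary $c$-comparison, identifies each $a\in I$ uniquely.

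When $n\ge 17$ there is no wraparound in $I-I$, and $C(d)$ depends only on $|d|$; tabulating the eight values of $|i-j|$ arising from pairs in $I$ shows that $F(0),F(1),\dots,F(6),F(8)$ are pairwise distinct, forcing $H$ to fix $N((1,-1))$ pointwise. For each small order $n\in\{12,13,14,15,16\}$, a finite inspection of $C$ yields either pairwise distinct fingerprints or a single coincident pair, which is then distinguished by comparing $c$-values with a fingerprint-unique neighbor that $H$ must fix. A symmetric argument at $(1,1)$ handles $N((1,1))=S^{-1}\times\{-1\}$, and Lemma~\ref{lem:done if nbrs fixed} delivers $\Aut(\Haar(R,S))_0=\hat R$.

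The main obstacle lies in the five small cases $n\in\{12,\dots,16\}$: the wraparound $-j\equiv n-j\pmod n$ perturbs several values of $C(d)$ and may collapse some of the multiset fingerprints. The structural feature of $S$ that makes the argument go through is the omission of $s^7$, which makes $I$ sufficiently asymmetric that, even after a collapse of fingerprints, a secondary common-neighbor comparison against an already-fixed neighbor still resolves the ambiguity in each small-$n$ case.
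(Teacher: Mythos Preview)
Your reading of the conclusion as $\Aut_0=\hat R$ and your reduction to $R=\langle s\rangle$ are both right, and your strategy does go through. However, it is genuinely different from the paper's argument, and somewhat heavier. The paper does not look at common-neighbour counts \emph{within} $S\times\{1\}$; instead it counts common neighbours between $(1,-1)$ and other vertices of $R\times\{-1\}$. One quickly sees that exactly the four vertices $(s^{\pm1},-1),(s^{\pm2},-1)$ share six neighbours with $(1,-1)$. Among the neighbours of $(1,-1)$, only $(s^8,1)$ has a unique neighbour in that four-element set, and that unique neighbour is $(s^2,-1)$; so any $g\in\Aut_{(1,-1)}$ fixes both $(s^8,1)$ and $(s^2,-1)$. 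Repeating the argument centred at $(s^2,-1)$ pins down $(s,-1)$, and then a one-step induction on translates fixes everything. This is uniform for all $n\ge 12$, with no separate small-$n$ analysis.

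Your fingerprint method also works, and for $n\ge 17$ the fingerprints are indeed pairwise distinct (I checked). A small correction: for $n=12$ there are actually \emph{two} coincident pairs, $F(2)=F(3)$ and $F(1)=F(6)$, not one; but since $F(0)$ is unique and $C(2-0)\ne C(3-0)$, $C(1-0)\ne C(6-0)$, your secondary comparison against the already-fixed vertex $(1,1)$ resolves both pairs, so the argument survives. The trade-off is clear: your approach leads directly into Lemma~\ref{lem:done if nbrs fixed} and is conceptually straightforward, but requires tabulating $C(d)$ and the fingerprints for each of $n\in\{12,\dots,16\}$; the paper's approach needs a slightly cleverer choice of invariant but then handles all $n\ge 12$ in one stroke and immediately yields the inductive propagation without invoking Lemma~\ref{lem:done if nbrs fixed}.
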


\begin{proof}
From Lemma~\ref{connection2}, $\mathrm{Haar}(R,S)$ is connected and hence $\{R\times\{-1\},R\times\{1\}\}$ is the unique bipartition of $\mathrm{Haar}(R,S)$.

Observe that since $\mathbf{o}(s)\ge 12$, there are only $4$ vertices that have $6$ common neighbours with $(1,-1)$: namely $(s^{\pm1},-1)$ and $(s^{\pm2},-1)$ (any other vertex in $R\times \{-1\}$ has at least $3$ neighbours that are not in $S$). Furthermore, among the neighbours of $(1,-1)$, only $(s^8,1)$ has a unique neighbour in $\{(s^{\pm1},-1),(s^{\pm2},-1)\}$, and that neighbour is $(s^2,-1)$. Therefore, if $g \in \Aut(\Haar(R,S))_{(1,-1)}$ then $g$ fixes $(s^8,1)$ and $(s^2,-1)$. Also, only one of the four vertices that has $6$ common neighbours with $(s^2,-1)$ lies in $\{(s^{\pm1},-1),(s^{-2},-1)\}$: namely, $(s,-1)$. Therefore $(s,-1)$ must also be fixed by $g$.

Since the hypothesis that $g$ fixes $(1,-1)$ was sufficient to deduce that $g$ fixes $(s,-1)$, we can see inductively that $g$ fixes every vertex of $\langle s \rangle \times \{-1\}=R\times\{-1\}$, and since $g$ fixes $(s^8,1)\in R\times\{1\}$ we can likewise deduce that $g$ fixes every vertex of $\langle s \rangle \times \{1\}$.
\end{proof}

Lemma~\ref{cyclic2} allows us to establish the automorphism group of these Haar graphs over cyclic groups.

\begin{corollary}\label{cor:cyclic construction}
Let $R=\langle s\rangle$ be cyclic with $\mathbf{o}(s) \ge 12$ let $S=\{s^i: 0 \le i \le 6\} \cup \{s^8\}$. Then $\Aut(\Haar(R,S))=\hat R \rtimes \langle \iota\rangle,$ where $\iota$ is as defined in~\eqref{iota}.

Furthermore, if $\mathbf{o}(s) \ge 22$ then $4 \le |S|\le (|R|-6)/2$.
\end{corollary}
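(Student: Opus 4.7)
The plan is to combine Lemma~\ref{cyclic2} (which determines the bipartition-preserving subgroup $\Aut(\Haar(R,S))_0$) with the observation recorded just after equation~\eqref{iota}: when $R$ is abelian, the involution $\iota:(x,\varepsilon)\mapsto(x^{-1},-\varepsilon)$ belongs to $\Aut(\Haar(R,S))$ for every subset $S$. Since our $R=\langle s\rangle$ is cyclic, $\iota$ is automatically available.

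First I would check that $\Haar(R,S)$ is connected. By Lemma~\ref{connection2} this reduces to $\langle S^{-1}S\rangle=R$, which is immediate from $s=1^{-1}s\in S^{-1}S$ together with $R=\langle s\rangle$. Connectedness forces $\{R\times\{-1\},R\times\{1\}\}$ to be the unique bipartition of $\Haar(R,S)$, so every automorphism either preserves or swaps the two parts; in particular $\Aut(\Haar(R,S))_0$ has index at most $2$ in $\Aut(\Haar(R,S))$. Lemma~\ref{cyclic2} then identifies the bipartition-preserving part as $\Aut(\Haar(R,S))_0=\hat R$. Now $\iota\in\Aut(\Haar(R,S))$ swaps the two parts of the bipartition, so $\iota\notin\hat R$; this forces $\hat R$ to be a normal subgroup of index exactly $2$, and writes $\Aut(\Haar(R,S))=\hat R\cup\iota\hat R=\hat R\langle\iota\rangle$. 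Because $\iota^2=1$ and $\langle\iota\rangle\cap\hat R=1$, this product is the semidirect product $\hat R\rtimes\langle\iota\rangle$ asserted in the corollary.

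For the ``furthermore'' clause, the exponents $0,1,\ldots,6,8$ are pairwise distinct modulo $\mathbf{o}(s)$ whenever $\mathbf{o}(s)\ge 9$ (any two differ by at most $8$), so $|S|=8$; the double inequality $4\le 8\le(|R|-6)/2$ is then equivalent to $|R|\ge 22$, which is precisely our hypothesis. All of the real work in this corollary is already packaged in Lemma~\ref{cyclic2}; there is no genuine obstacle, and the argument above is essentially bookkeeping that promotes the bipartition stabiliser to the full automorphism group via $\iota$.
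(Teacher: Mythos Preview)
Your proof is correct and follows essentially the same route as the paper. The only cosmetic difference is that the paper phrases the counting step via vertex-transitivity and a trivial point stabiliser (from Lemma~\ref{cyclic2} it takes $A_{(1,-1)}=1$, and since $\hat R\rtimes\langle\iota\rangle\le A$ acts transitively on vertices this forces $|A|=2|R|$), whereas you phrase it via the index-$2$ containment $\Aut_0\le\Aut$ together with $\Aut_0=\hat R$; these are equivalent bookkeeping for the same facts.
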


\begin{proof}
Let $A=\mathrm{Aut}(\mathrm{Haar}(R,S_i))$ where $i \in \{1,2\}$.
As $R$ is abelian, $\hat R\rtimes\langle\iota\rangle\le A$ and hence $\mathrm{Haar}(R,S)$ is vertex transitive.

By Lemma~\ref{cyclic2}, $A_{(1,-1)}=1$ and hence $A\le \hat R\rtimes \langle\iota\rangle$.

We have $|S_2|=8$, making the ``furthermore" an easy calculation.
\end{proof}

Lemma~\ref{cyclic2} also allows us to establish the existence of Haar graphs over non-cyclic $2$-generated groups having small automorphism group.

\begin{proposition}\label{prop:2gen}
Let $R=\langle s,t\rangle$ with ${\bf o}(s)\ge 12$ and with $s^t\neq s$, and let $S_1=\{s^i: 0 \le i \le 6\} \cup \{s^8\} \cup \{t,ts\}$ and $S_2=S_1 \cup\{ts^3\}$. When considering $S_1$ we also assume $s^t \neq s^{-1}$; for $S_2$ we assume $R$ is dihedral.
Then $$\mathrm{Aut}(\mathrm{Haar}(R,S_1))=\Aut(\Haar(R,S_2)=\hat R.$$

Furthermore, if $|R:\langle s\rangle|>2$ then $4 \le |S_1| \le (|R|-6)/2$, while if  $\mathbf{o}(s) \ge 14$ then $4 \le |S_2|\le (|R|-6)/2$.
\end{proposition}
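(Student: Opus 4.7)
The ``furthermore'' cardinality bounds are immediate: $|S_1|=10$ and $|S_2|=11$, and the stated hypotheses on $|R:\langle s\rangle|$ or $\mathbf{o}(s)$ give $|R|\ge 2|S_i|+6$. Connectivity of $\Haar(R,S_i)$ follows from Lemma~\ref{connection2}, since $S_i^{-1}S_i\supseteq\{s,t\}$ generates $R$. The observation I will lean on throughout is that $S_i\cap\langle s\rangle=\{s^j:0\le j\le 6\}\cup\{s^8\}$, so the subgraph of $\Haar(R,S_i)$ induced on $\langle s\rangle\times\{-1,1\}$ is precisely the cyclic Haar graph treated in Lemma~\ref{cyclic2}.

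Let $g\in\Aut(\Haar(R,S_i))_{(1,-1)}$. The strategy is to replay Lemma~\ref{cyclic2}'s argument globally, the main new ingredient being the combinatorial bound: for every $r\in R\setminus\{1\}$, the number of common neighbours $|S_i\cap S_ir|$ of $(1,-1)$ and $(r,-1)$ is at least $6$ only when $r\in\{s^{\pm 1},s^{\pm 2}\}$. For $r\in\langle s\rangle$ this follows from the cyclic count inside Lemma~\ref{cyclic2} plus at most one extra contribution from the ``off-cyclic'' elements $\{t,ts\}$ or $\{t,ts,ts^3\}$, which only inflates the count at $r\in\{s^{\pm 1},s^{\pm 2},s^{\pm 3}\}$ and never reaches the $6$-threshold outside the four listed elements. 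For $r$ in a non-trivial coset of $\langle s\rangle$ I would split $S_ir$ by cosets: the dihedral case is immediate from $s^it=ts^{-i}$, and in the non-dihedral $S_1$ setting the hypothesis $s^t\notin\{s,s^{-1}\}$ is exactly what ensures that conjugation by $t$ shifts the cyclic tail $\{s^j:0\le j\le 6\}\cup\{s^8\}$ off itself enough to keep every coset-wise piece short. Once this bound is in hand, Lemma~\ref{cyclic2}'s three-step argument applies almost verbatim to show $g$ fixes $(s^8,1)$, $(s^2,-1)$, $(s,-1)$, and inductively all of $\langle s\rangle\times\{-1,1\}$ pointwise.

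After this, $g$ permutes the remaining neighbours of $(1,-1)$, namely $\{(t,1),(ts,1)\}$ for $S_1$ or $\{(t,1),(ts,1),(ts^3,1)\}$ for $S_2$. A direct computation shows each has a distinct set of neighbours inside the now pointwise-fixed $\langle s\rangle\times\{-1\}$ (for example $(t,1)\sim(s^{-1},-1)$ while $(ts,1)\sim(s,-1)$, differing since $\mathbf{o}(s)\ge 12$), so $g$ fixes each of them. The symmetric argument starting from $(1,1)\in\langle s\rangle\times\{1\}$ pins down the off-cyclic elements of $S_i^{-1}\times\{-1\}$, and Lemma~\ref{lem:done if nbrs fixed} then gives $\Aut(\Haar(R,S_i))_0=\hat R$. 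To upgrade this to $\Aut(\Haar(R,S_i))=\hat R$, I still have to exclude any bipartition-swapping $\varphi$: by Lemma~\ref{group automorphism} this reduces to the existence of $\hat\varphi\in\Aut(R)$ and $x\in R$ with $S_i^{\hat\varphi^{-1}}=x^{-1}S_i^{-1}$. The long and asymmetric cyclic tail of $S_i$ (containing $s^0,\ldots,s^6,s^8$ but not $s^7$) essentially forces $\hat\varphi$ to act on $\langle s\rangle$ as $s\mapsto s^{\pm 1}$, and a short check against the off-cyclic elements under either choice then contradicts either $s^t\notin\{s,s^{-1}\}$ (for $S_1$) or the dihedral relations combined with the position of $ts^3$ (for $S_2$).

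The main obstacle I anticipate is the uniformity of the coset-wise common-neighbour bound in the non-dihedral $S_1$ case. Because $\langle s\rangle$ need not be normal, $s^t$ can be essentially any element of $R$ outside $\{s,s^{-1}\}$, and one must argue that under every such behaviour the overlap $S_1\cap S_1r$ stays strictly below $6$ for all $r\notin\{s^{\pm 1},s^{\pm 2}\}$. Handling this uniformly, rather than by a case analysis on $s^t$, is the technical heart of the argument.
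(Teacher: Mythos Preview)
Your proposal has a genuine gap in the common-neighbour bound, and the difficulty lies opposite to where you anticipate it. The $S_1$ case with $r\notin\langle s\rangle$ is actually trivial: since $|S_1\setminus\langle s\rangle|=2$, a coset pigeonhole gives $|S_1\cap S_1r|\le 4$ for every $r\notin\langle s\rangle$, uniformly and with no dependence on $s^t$. It is the dihedral $S_2$ case, which you call ``immediate'', where your claim actually fails. With $s^t=s^{-1}$ and $r=s^{-3}t$ one checks directly that $S_2\cap S_2r=\{1,s^2,s^3,t,ts,ts^3\}$, so $(r,-1)$ has six common neighbours with $(1,-1)$; the same occurs for $r=s^{-k}t$ with $k\in\{4,5,6\}$. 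Moreover, within $\langle s\rangle$ the off-cyclic elements of $S_2$ push the count at $r=s^{\pm3}$ up to six as well. So for $S_2$ the set of vertices achieving at least six common neighbours with $(1,-1)$ has ten elements, not four, and the verbatim replay of Lemma~\ref{cyclic2} collapses.

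Even in the $S_1$ case the replay has a subtler problem at the next step: $(t,1)$ and $(ts,1)$ each also have a unique neighbour in $\{(s^{\pm1},-1),(s^{\pm2},-1)\}$ (namely $(s^{-1},-1)$ and $(s,-1)$ respectively), so $(s^8,1)$ is no longer singled out among the neighbours of $(1,-1)$ by that property, and the deduction that $g$ fixes $(s^8,1)$ does not follow. The paper sidesteps all of this by not replaying Lemma~\ref{cyclic2} globally. Instead it uses the looser threshold of \emph{seven} common neighbours---which a simple coset count shows can only be attained inside $\langle s\rangle\times\{-1\}$---together with the observation that $(1,-1)$ and $(s,-1)$ do attain seven (the six cyclic ones plus $(ts,1)$), to conclude via transitive closure that the cosets $\langle s\rangle g\times\{\varepsilon\}$ and $\langle s\rangle g\times\{-1,1\}$ are blocks of imprimitivity. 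Once that block structure is in hand, any $g\in A_{(1,-1)}$ restricts to an automorphism of the induced subgraph on $\langle s\rangle\times\{-1,1\}$, which \emph{is} the cyclic Haar graph, and Lemma~\ref{cyclic2} applies there without modification. Your remaining steps (distinguishing the off-cyclic neighbours by their neighbours in the now-pointwise-fixed $\langle s\rangle\times\{-1\}$, and the Lemma~\ref{group automorphism} endgame) then run essentially as you sketch and as the paper does.
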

\begin{proof}
Let $\Gamma_1=\Haar(R,S_1)$ and $\Gamma_2=\Haar(R,S_2)$. Let $A=\mathrm{Aut}(\Gamma_1)$ and let $B=\Aut(\Gamma_2)$. We begin by showing that in each of the two graphs,  every vertex of $R \times \{-1,1\}$ that has at least seven common neighbours with $(1,-1)$ lies in $\langle s \rangle \times \{-1\}$.

First note that any vertex that has common neighbours with $(1,-1)$ must lie in $R \times \{-1\}$. Suppose that in either graph, $(x,-1)$ has at least seven common neighbours with $(1,-1)$. Therefore, at least four of these common neighbours can be denoted $(s_i,1)$ for $1 \le i \le 4$, where each $s_i\in S\setminus\{t,ts\}\subseteq \langle s\rangle$. As each  $(s_i,1)$ is a neighbour of $(x,-1)$, we have $s_ix^{-1}\in S$. Since $s_i\in \langle s\rangle$ and  for each $j \in \{1,2\}$,  $S_j\setminus \langle s\rangle \subseteq \{t,ts,ts^3\}$, this is only possible if $x\in \langle s\rangle$.

In each graph, $(s^i,1)$ is a common neighbour of $(1,-1)$ and $(s,-1)$ for every $1 \le i \le 6$, and $(ts,1)$ is also a common neighbour, for a total of $7$ common neighbours.

We prove that the following sets are blocks of imprimitivity for $A$ and for $B$\footnote{This is a mild abuse of terminology since $A$ and/or $B$ may not be transitive.}:
\begin{itemize}
\item $R \times \{-1\}$, $R \times \{1\}$;
\item $\langle s\rangle g \times \{\varepsilon\}$, where $g \in R$ and $\varepsilon \in \{-1,1\}$; and
\item $\langle s \rangle g \times \{-1,1\}$, where $g \in R$.
\end{itemize}
From Lemma~\ref{connection2}, since $1 \in S_1,S_2$ and $\langle S_1 \rangle=\langle S_2\rangle=R$, $\Gamma_1$ and $\Gamma_2$ are connected. Therefore $R\times \{-1\}$ and $R \times \{1\}$ are uniquely determined by the parity of the length of any path from a fixed vertex. This shows that $R\times \{-1\}$ and $R \times \{1\}$ are blocks of imprimitivity for $A$ and for $B$.

By our argument above, in both $\Gamma_1$ and $\Gamma_2$, $(s,-1)$ has more common neighbours with $(1,-1)$ than any vertex outside $\langle s\rangle \times \{-1\}$. By taking the transitive closure of this relation, we obtain $\langle s \rangle \times \{-1\}$, which must therefore yield a block of imprimitivity for $A$ and for $B$. An analogous argument shows that $\langle s\rangle\times \{1\}$ is also a block of imprimitivity for  $A$ and for $B$.

For any given set $\langle s\rangle g \times \{\varepsilon\}$, in both $\Gamma_1$ and $\Gamma_2$ there is a unique other set with which each of the vertices of $\langle s\rangle g \times \{\varepsilon\}$ has more than three neighbours: $\langle s \rangle g \times \{-\varepsilon\}$. Thus $(\langle s\rangle\times\{-1\})\cup(\langle s\rangle\times\{1\})=\langle s\rangle\times\{-1,1\}$ is also a block of imprimitivity for $A$ and for $B$.

Let $\gamma\in A$ or $\gamma \in B$  with $(s^i,-1)^\gamma=(s^i,-1)$. Since $\langle s\rangle \times\{- 1,1\}$ and $\langle s\rangle \times\{ -1\}$  are blocks for $A$,
 $\gamma$ fixes each of  $\langle s \rangle \times \{-1\}$ and $\langle s \rangle  \times \{1\}$ setwise. So $\gamma$ induces an automorphism of the induced subgraph on $\langle s \rangle  \times \{ -1,1\}$, which is $\Haar(\langle s\rangle, S_j \cap \langle s\rangle)$ for some $j \in \{1,2\}$.
 Moreover, since $\gamma$ fixes some point of this set, it fixes the natural bipartition. Therefore, using Lemma~\ref{lem:bipartite complement}, $\gamma$ induces an automorphism of $\Haar(\langle s\rangle, \langle s\rangle \cap S_j)$ for each $j \in \{1,2\}$\footnote{In fact, $\langle s\rangle \cap S_1=\langle s\rangle \cap S_2$.}. Now Lemma~\ref{cyclic2}  implies that $\gamma$ fixes   $\langle s \rangle  \times \{-1,1 \}$ pointwise.
 At this point, we have established that $A_{(1,-1)}$ and $B_{(1,-1)}$ fix every point of $\langle s\rangle \times \{- 1,1\}$. An entirely analogous argument shows that $A_{(1,1)}$ and $B_{(1,1)}$ fix every point of $\langle s\rangle \times \{- 1,1\}$.
 
   Note that in $\Haar(R,S_i)$, $(t,1)$ is the only mutual neighbour of $(1,-1)$ and $(s^{-1},-1)$ outside $\langle s \rangle \times \{1\}$, while $(ts,1)$ is the only mutual neighbour of $(1,-1)$ and $(s,-1)$outside $\langle s \rangle \times \{1\}$. Therefore $A_{(1,-1)}$ and $B_{(1,-1)}$ must fix both $(t,1)$ and $(ts,1)$, and in the case of $B_{(1,-1)}$ also $(ts^3,1)$, since these are the only remaining neighbours of $(1,-1)$. This implies that  $A_{(1,-1)}$ fixes every point of $S_1 \times \{1\}$, and analogously, $A_{(1,1)}$ fixes every point of $S_1^{-1} \times \{-1\}$. Also, $B_{(1,-1)}$ fixes every point of $S_2 \times \{1\}$, and analogously, $B_{(1,1)}$ fixes every point of $S_2^{-1} \times \{-1\}$

We have now established all of the conditions of Lemma~\ref{lem:done if nbrs fixed}, so $A_0=B_0=\hat R$.

We now argue by contradiction and we suppose $A\ne \hat R$ or $B \neq \hat R$. Let $\varphi\in A\setminus \hat R$ or in $B \setminus \hat R$. Then $(1,-1)^\varphi\in R\times\{1\}$ and hence, replacing $\varphi$ if necessary, we may suppose that $(1,-1)^\varphi=(1,1)$.  By Lemma~\ref{group automorphism}, there exists $\hat\varphi\in\mathrm{Aut}(R)$ with $(r,-1)^\varphi=(r^{\hat \varphi},1)$, $\forall r\in R$. Since $\langle s\rangle\times\{-1,1\}$ is a block of imprimitivity for $A$ and for $B$ and since $\varphi$ maps the vertex $(1,-1)\in \langle s\rangle\times\{-1,1\}$ to a vertex in the same block, $\varphi$ fixes $\langle s\rangle\times\{-1,1\}$ setwise.
Therefore, $\varphi$ induces a graph automorphism on $\Haar(\langle s\rangle, S_j \cap \langle s\rangle)$ for each $j \in \{1,2\}$. Using Lemma~\ref{lem:bipartite complement} and Corollary~\ref{cor:cyclic construction}, we know that $\Aut(\Haar(\langle s\rangle, S_j\cap \langle s\rangle))=\widehat{\langle s \rangle} \rtimes \langle \iota\rangle$, where $(r,\varepsilon)^\iota=(r^{-1},-\varepsilon)$, $\forall (r,\varepsilon)\in \langle s\rangle\times\{-1,1\}$. As $(1,-1)^\varphi=(1,1)=(1,-1)^{\iota}$, the restriction of $\varphi$ to $\langle s\rangle\times\{-1,1\}$ equals $\iota$. In particular, $(1,1)^\varphi=(1,1)^{\iota}=(1,-1)$.

 Since $\varphi$ maps $(1,1)$ to $(1,-1)$, $\varphi$ maps the neighbourhood $S^{-1}\times\{-1\}$ of $(1,1)$ to the neighbourhood $S\times\{1\}$ of $(1,-1)$. However, since $\varphi$ fixes $\langle s\rangle\times\{-1,1\}$ setwise, we deduce in $\Gamma_1$ that
$$\{(t^{-1},-1),(s^{-1}t^{-1},-1)\}^\varphi=\{(t,1),(ts,1)\},$$
or in $\Gamma_2$ that 
$$\{(t^{-1},-1),(s^{-1}t^{-1},-1),(s^{-3}t^{-1},-1)\}^\varphi=\{(t,1),(ts,1),(ts^3,1)\}.$$
We consider the remaining cases individually. 

In $\Gamma_1$, assume first $(t^{-1},-1)^\varphi=(t,1)$ and $(s^{-1}t^{-1},-1)^\varphi=(ts,1)$. Thus $(t^{-1})^{\hat\varphi}=t$ and 
$$ts=(s^{-1}t^{-1})^{\hat \varphi}=(s^{-1})^{\hat\varphi}(t^{-1})^{\hat\varphi}=(s^{-1})^{-1} t=st.$$
This implies that $s^t=s$, which contradicts the fact that $s^t\notin \{s,s^{-1}\}$. Similarly, assume $(t^{-1},-1)^\varphi=(ts,1)$ and $(s^{-1}t^{-1},-1)^\varphi=(t,1)$. Thus $(t^{-1})^{\hat\varphi}=ts$ and 
$$t=(s^{-1}t^{-1})^{\hat \varphi}=(s^{-1})^{\hat\varphi}(t^{-1})^{\hat\varphi}=(s^{-1})^{-1} ts=sts.$$
This implies that $s^t=s^{-1}$, which  again contradicts the fact that $s^t\notin \{s,s^{-1}\}$.

Now in $\Gamma_2$, $R$ is dihedral, so $s^t=s^{-1}$.
Observe that $(t,1)$ has exactly three neighbours in $\langle s \rangle \times \{-1\}$: $(1,-1)$, $(s^{-1},-1)$, and $(s^{-3},-1)$. The preimages of these under $\varphi$ are $(1,1)$, $(s,1)$, and $(s^3,1)$ (using the known action of $\iota$). Therefore these must be neighbours of the preimage of $(t,1)$ under $\varphi$, which is one of $(t^{-1},-1)$, $(s^{-1}t^{-1},-1)$, or $(s^{-3}t^{-1},-1)$. 
However, none of $(t^{-1},-1)$, $(s^{-1}t^{-1},-1)$, or $(s^{-3}t^{-1},-1)$ is a neighbour of both $(s,1)$ and $(s^3,1)$.  Therefore no such $\varphi$ exists.

We certainly have $|S_1|, |S_2| \ge 4$. In fact, $|S_1|=10$. If $|R:\langle s \rangle |>2$ then since $\mathbf{o}(s) \ge 9$ we have $3\mathbf{o}(s)/2 \ge 13$, so $$|S_1| \le 3\mathbf{o}(s)/2-3 \le |R|/2-3=(|R|-6)/2.$$
Since $|S_2|=11$, as long as $|R|\ge 28$ we have $|S_2| \le (|R|-6)/2$. We have this as long as $\mathbf{o}(s)\ge 14$.
\end{proof}

\section{Inductive constructions}\label{sec:induction}
Pivoting on the results in Section~\ref{constructions} we start our main argument. The results in this section are tailored to prove Theorem~\ref{thrm:mainHRR} inductively. Let $R$ be a finite group  and let $N$ be a non-identity proper normal subgroup of $R$. Broadly speaking, Propositions~\ref{prop:cyclic trivial stabiliser} and~\ref{prop:not cyclic trivial stabiliser} allow to construct a HGR for  $R$ using a HGR for $N$. Proposition~\ref{prop:one more time with feeling} serves a similar purpose, in the special case that $N$ is abelian.

\begin{proposition}\label{prop:cyclic trivial stabiliser}
Let $N$ be a non-identity proper normal subgroup of $R$ with $R=\langle N,r\rangle$, for some $r\in R$. Suppose there is some $S_N \subseteq N$ such that $\Aut(\Haar(N,S_N))_0=\hat N$  and $4 \le |S_N| \le (|N|-6)/2$. Define $S_1=S_N \cup (Nr\setminus\{r\})$ and $S_2=R\setminus S_1$.

Then, for $i\in \{1,2\}$, $\hat R=\Aut(\Haar(R,S_i))_0$.  Also, if $g \in \Aut(\Haar(R,S_i))$ is such that $(1,-1)^g =(1,1)$, then $(r,1)^g=(r^{-1},-1)$, $(N\times \{-1,1\})^g=N\times \{-1,1\}$ and the restriction of $g$ to $N\times\{-1,1\}$ is an automorphism of $\mathrm{Haar}(N,S_N)$.

Furthermore, if $|R:N| \ge 3$ then $4 \le |S_1| \le (|R|-6)/2$, while if $|R:N|=2$ then $4 \le |S_2| \le (|R|-6)/2$.
\end{proposition}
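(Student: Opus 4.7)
The plan is to establish $\Aut(\Haar(R,S_1))_0=\hat R$ and then to transfer the conclusion to $S_2$ via Lemma~\ref{lem:bipartite complement}. Write $A=\Aut(\Haar(R,S_1))_0$. First I would verify connectedness: the hypothesis $\Aut(\Haar(N,S_N))_0=\hat N$ together with Lemma~\ref{lem:disconnected} (applicable because $|N|\geq 14$, which follows from $4\leq|S_N|\leq(|N|-6)/2$) ensures that $\Haar(N,S_N)$ is connected, so by Lemma~\ref{connection2} the set $S_N^{-1}S_N$ generates $N$. Since $S_1^{-1}S_1$ also contains a product $m^{-1}s\in Nr^{-1}$ with $m\in Nr\setminus\{r\}$ and $s\in S_N$, one deduces $\langle S_1^{-1}S_1\rangle=\langle N,r\rangle=R$, so $\Haar(R,S_1)$ is connected by Lemma~\ref{connection2}. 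Connectedness forces every automorphism of $\Haar(R,S_i)$ to preserve the bipartition, which is why the second assertion can also be treated with $i=1$.

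The structural key is that any vertex $(x,-1)$ with $x\in Nr^i$ has exactly $|S_N|$ neighbours in $Nr^i\times\{1\}$ and exactly $|N|-1$ neighbours in $Nr^{i+1}\times\{1\}$ (namely every vertex there except $(rx,1)$). From $2|S_N|\leq|N|-6$ two things follow. First, two vertices of $R\times\{-1\}$ in the same coset of $N$ share at least $|N|-2$ common neighbours, while two in distinct cosets share at most $2|S_N|$, so the coset partition on each side is $A$-invariant. Second, the bipartite subgraph between $Nr^i\times\{-1\}$ and $Nr^i\times\{1\}$ is a copy of $\Haar(N,S_N)$ with $|S_N|\cdot|N|$ edges, while that between $Nr^i\times\{-1\}$ and $Nr^{i+1}\times\{1\}$ carries $(|N|-1)\cdot|N|$ edges, so the two types of coset pairs are distinguishable. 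This yields the $A$-invariant block system $\{Nr^i\times\{-1,1\}\}_i$ with blocks of size $2|N|$.

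Let $K$ denote the kernel of the $A$-action on this block system. An element of $K$ restricts on each block to an automorphism of $\Haar(N,S_N)$ fixing the bipartition, hence lies in $\hat N$; the perfect non-edge matching $(x,-1)\leftrightarrow(rx,1)$ between adjacent blocks then forces the translation to be the same on every block, so $K=\hat N$. Moreover, the directed cycle $Nr^0\to Nr^1\to\cdots\to Nr^0$ induced by the near-complete edges has cyclic automorphism group of order $d=|R:N|$, whence $A/K\leq\mathbb{Z}/d$; since $\hat R/\hat N\cong\mathbb{Z}/d$ is already contained in $A/K$, equality holds and the block action is regular. Consequently $A_{(1,-1)}\leq K=\hat N$, and only the identity of $\hat N$ fixes $(1,-1)$, so $A=\hat R$.

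For the second assertion, the block system is a graph invariant, hence preserved by the full automorphism group; so $(N\times\{-1,1\})^g=N\times\{-1,1\}$ because this block contains both $(1,-1)$ and $(1,1)$, and the restriction of $g$ to this block is therefore an automorphism of $\Haar(N,S_N)$. Tracking the coset action of $g$ (which swaps sides and fixes the block $N$) shows that the matched and shifted edge types force $g$ to act on cosets as the reversal $Nr^i\mapsto Nr^{-i}$; hence $Nr\times\{1\}$ is sent to $Nr^{-1}\times\{-1\}$. The vertex $(r,1)$ is the unique non-neighbour of $(1,-1)$ in $Nr\times\{1\}$, and the unique non-neighbour of $(1,1)$ in $Nr^{-1}\times\{-1\}$ is $(r^{-1},-1)$, so $(r,1)^g=(r^{-1},-1)$. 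Finally, the size estimates follow from $|S_1|=|S_N|+|N|-1$ (used when $d\geq 3$) and $|S_2|=(d-1)|N|-|S_N|+1$ (used when $d=2$) by a direct arithmetic check against $|S_N|\leq(|N|-6)/2$. The main obstacle is the rigidity step $K=\hat N$: a priori an element of $K$ could act as different translations on different coset blocks, and it is precisely the non-edge matching structure between consecutive blocks that prevents this.
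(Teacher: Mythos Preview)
Your argument is correct and reaches the same conclusion as the paper, but the mechanism you use to pin down $A_0$ is genuinely different from the paper's.

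Both proofs begin identically: connectedness, then the common-neighbour count to obtain the block system $\{Nr^i\times\{\varepsilon\}\}$, then the edge-count between blocks to coarsen to $\{Nr^i\times\{-1,1\}\}$. From that point the paper works pointwise: it shows that $A_{(1,-1)}$ fixes $N\times\{-1,1\}$ pointwise (via the hypothesis on $\Haar(N,S_N)$), then uses the unique non-neighbour in $Nr\times\{1\}$ to extend this to all of $S_1\times\{1\}$, and feeds the conclusion into Lemma~\ref{lem:done if nbrs fixed}. You instead argue structurally: compute the kernel $K$ of the block action (an element of $K$ acts on each block through $\hat N$, and the non-edge matching $(x,-1)\leftrightarrow(rx,1)$ forces the action on one block to determine the action on the next, so $K=\hat N$), and then observe that the induced action on blocks lands in the rotation group of a directed $d$-cycle, whence $|A_0|\le |N|\cdot d=|R|$. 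Your approach avoids Lemma~\ref{lem:done if nbrs fixed} entirely; the paper's approach avoids having to analyse the quotient $A_0/K$.

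One remark on phrasing: when you write that the non-edge matching ``forces the translation to be the same on every block'', this is not literally true --- the translations $n_i$ on distinct blocks are related by $n_{i+1}=rn_ir^{-1}$, not equality. What is true (and what you need) is that this conjugacy relation is exactly how a single element $\rho_{n_0}\in\hat N\le\hat R$ acts on the successive blocks, so any $k\in K$ agrees with some $\rho_{n_0}$ everywhere. The cleanest way to say it: once $k\in K$ is adjusted by an element of $\hat N$ so as to fix block~$0$ pointwise, the non-edge matching propagates this pointwise fixing from block to block, forcing $k\in\hat N$. With that clarification your kernel step is watertight.

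For the side-swapping assertion and the cardinality bounds your treatment matches the paper's, with the minor difference that you phrase the coset action of $g$ explicitly as the reversal $i\mapsto -i$, whereas the paper argues the single instance $(Nr\times\{1\})^g=Nr^{-1}\times\{-1\}$ directly; both then finish with the same unique-non-neighbour identification of $(r,1)$ and $(r^{-1},-1)$.
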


\begin{proof}
As  $\Aut(\Haar(N,S_N))_0=\hat N$, by Lemma~\ref{lem:disconnected}, $\Haar(N, S_N)$ is connected and therefore, by Lemma~\ref{connection2}, so is $\Haar(R,S_1)$.\footnote{Observe that, as $|S_N|\le (|N|-6)/2$, we have $|N|\ge 6$ and hence, we are in a position to use Lemma~\ref{lem:disconnected}.}  Using Lemma~\ref{lem:bipartite complement}, this implies $\Aut(\Haar(R,S_1))_0=\Aut(\Haar(R,S_2))_0$ and hence we need only consider one of these except for the calculation of cardinality. We choose to work with $S_1$ and we let 
$A=\mathrm{Aut}(\mathrm{Haar}(R,S_1))$.

Since $\Haar(R,S_1)$ is connected,  $\{R\times \{-1\},R\times \{1\}\}$ is a system of imprimitivity for $A$.

Let $n \in N$. Every element of $(Nr\setminus\{r,nr\})\times\{1\}$ is a common neighbour of $(1,-1)$ and $(n,-1)$. Thus, $(1,-1)$ and $(n,-1)$ have at least $|N|-2$ common neighbours.

Let $t \in R\setminus N$, and consider the common neighbours of $(t,-1)$ and $(1,-1)$. If $t \notin Nr \cup Nr^{-1}$, then $(t,-1)$ has no common neighbours with $(1,-1)$. If $t \in Nr$, then  $(t,-1)$ has at most $|S_N|$ common neighbours with $(1,-1)$\footnote{Namely, the only possible common neighbours are the elements of $S_Nt \times \{1\}$.}, unless $Nr=Nr^{-1}$ in which case the elements of $S_N\times\{1\}$ may also be common neighbours; the same is true (with the sets reversed) if $t \in Nr^{-1}$. This shows that $(t,-1)$ has at most $2|S_N|$ common neighbours with $(1,-1)$. As $$2|S_N|\le 2\frac{|N|-6}{2}=|N|-6,$$ the elements of $R\setminus N$ have at most $|N|-6$ common neighbours with $(1,-1)$.

We have just shown that every element of $N \times \{-1\}$ has more common neighbours with $(1,-1)$ than any element of $R\setminus N \times \{-1\}$. Since the same argument holds for $N\times\{1\}$\footnote{with the role of $(1,-1)$ replaced by $(1,1)$}, $N\times\{-1\}$ and $N\times \{1\}$ are blocks of imprimitivity for $A$.

We now show that $N\times \{-1,1\}$ is also a block of imprimitivity for $A$.\footnote{This is clear when $\mathrm{Aut}(\mathrm{Haar}(R,S))$ fixes the two parts $R\times\{-1\}$ and $R\times\{1\}$ setwise; however, it needs some further motivation when $A$ is transitive, because it is not obvious that an element of $A$ mapping $N\times\{-1\}$ to $N\times\{1\}$ must also map $N\times\{1\}$ to $N\times\{-1\}$.} Let $g\in A$ with $(1,-1)^g\in R\times\{1\}$. Replacing $g$ if necessary, we may suppose that $(1,-1)^g=(1,1)$. We claim that $(N\times\{1\})^g=N\times\{-1\}$ and $(Nr\times\{1\})^g=Nr^{-1}\times\{-1\}$: the first  claim establishes that $N\times\{-1,1\}$ is indeed a block for $A$, and the second claim will be helpful for proving the last assertion in the lemma.
We clearly have $|N|-2 >(|N|-6)/2$. Therefore the elements of $Nr \times \{1\}$ are the only vertices in $\mathrm{Haar}(R,S_1)$ that have $|N|-2$ neighbours in $N\times \{-1\}$. Multiplying by $\rho_{r^{-1}}\in \hat R$, we deduce that the elements of $N\times \{1\}$ are the only vertices in $\mathrm{Haar}(R,S_1)$ that have $|N|-2$ neighbours in $Nr^{-1}\times\{-1\}$. 
Since $g$ is an automorphism, $g$ must preserve this ``uniqueness'' property and hence it must map $N\times\{1\}$ to $N\times\{-1\}$ and $Nr\times\{1\}$ to $Nr^{-1}\times\{-1\}$.

We have shown that $A_{(1,-1)}$ fixes $N\times \{-1,1\}$ setwise, and fixes $(1,-1)$. Thus any element of $A_{(1,-1)}$ induces an automorphism of $\Haar(N,S_N)$ that fixes $(1,-1)$. Since $\hat N=\Aut(\Haar(N,S_N))_0$, this means that $A_{(1,-1)}$ fixes $N \times \{-1, 1\}$ pointwise. 
Similarly, $A_{(1,1)}$ fixes $N \times \{-1,1\}$ setwise, and fixes $(1,1)$; therefore it fixes $N\times 
\{-1,1\}$ pointwise. Every vertex in $Nr^{-1} \times \{1\}$ has a unique non-neighbour in $N \times \{1\}$; since the latter set is fixed pointwise by $A_{(1,1)}$, this implies that $A_{(1,-1)}$ fixes  $Nr^{-1}\times\{1\}$ pointwise. We have now established that $A_{(1,1)}$ fixes $S^{-1} \times \{-1\}$ pointwise. An analogous argument shows that $A_{(1,-1)}$ fixes $S\times\{1\}$ pointwise. 
By Lemma~\ref{lem:done if nbrs fixed}, we deduce $\Aut(\Haar(R,S_1))_0=\hat R$.

Let $g \in A$ with $(1,-1)^g=(1,1)$. From above, $g$ maps $Nr\times\{1\}$ to $Nr^{-1}\times\{-1\}$. In particular, since $(r,1)$ is the unique non-neighbour of $(1,-1)$ in the first set, it must map to $(r^{-1},-1)$, as the unique non-neighbour of $(1,1)$ in the second set.

Suppose that $|R:N| \ge 3$. Then $$|S_1|=|S_N|+|N|-1 \le \frac{3|N|-8}{2}< \frac{|R|-6}{2}.$$ We also have $|S_1| \ge |S_N| \ge 4$.

If $|R:N|=2$ then $|S_1|>|N|>|R|/2$. However, in this case we have $$|S_2|=|R|-|S_N|-|N|+1=|R|/2-|S_N|+1 \le |R|/2-3=(|R|-6)/2$$ since $|S_N| \ge 4$.
\end{proof}

\begin{proposition}\label{prop:not cyclic trivial stabiliser}
Let $N$ be a non-identity proper normal subgroup of $R$ with $R/N$ non-abelian simple and let $r_1,r_2\in R$ with $R=\langle r_1,r_2,N\rangle$\footnote{Recall that every non-abelian simple group can be generated with two elements.} As in Lemma~$\ref{lemma:Rsimple}$, we may assume $\mathbf{o}(r_2N) \ge 5$. Suppose that there is some $S_N \subseteq N$ with $4 \le |S_N| \le (|N|-6)/2$ and  $\Aut(\Haar(N, S_N))_0=\hat N$.

Let $n_1 \in N$ with $n_1 \neq 1$, and define $$S=S_N \cup (Nr_2^{-1}\setminus\{r_2^{-1}\})\cup (Nr_2\setminus\{r_2,n_1r_2\}) \cup \{r_1\} \cup \{r_2r_1\}.$$
Then  $\Aut(\Haar(R,S))=\hat R$, and $4 \le |S| \le (|R|-6)/2$.
\end{proposition}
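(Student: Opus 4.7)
The plan is to follow Proposition~\ref{prop:cyclic trivial stabiliser}: use $N$ to induce a block system on $A=\Aut(\Haar(R,S))$, apply the inductive hypothesis on $\Haar(N,S_N)$, and then rule out automorphisms swapping the two parts of the bipartition. First, the cardinality. Since $R/N$ is non-abelian simple and $\mathbf{o}(r_2N)\ge 5$, the cosets $N, Nr_2, Nr_2^{-1}, Nr_1, Nr_2r_1$ are pairwise distinct, so the five pieces of $S$ are pairwise disjoint and $|S|=|S_N|+2|N|-1$. Combined with $|S_N|\ge 4$ and $|R|\ge 60|N|$, both $|S|\ge 4$ and $|S|\le(|R|-6)/2$ follow immediately. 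Connectivity of $\Haar(R,S)$ is handled by Lemma~\ref{connection2}: the set $S^{-1}S$ contains $S_N^{-1}S_N$, which generates $N$ (since $\Haar(N,S_N)$ is connected by Lemma~\ref{lem:disconnected}), together with differences spanning the five $N$-cosets occupied by $S$, giving $\langle S^{-1}S\rangle=R$.

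Next, a common-neighbour count establishes that $N\times\{-1\}$ and $N\times\{1\}$ are blocks of imprimitivity for $A$. For $n\in N\setminus\{1\}$, the vertices $(1,-1)$ and $(n,-1)$ share at least $(|N|-4)+(|N|-2)=2|N|-6$ common neighbours, contributed by the near-full cosets $Nr_2$ and $Nr_2^{-1}$ in $S$. For $x\in R\setminus N$, a case analysis on the coset $xN$ shows that the intersection $S\cap Sx$ lives in at most two $N$-cosets and is bounded above by $2|S_N|\le |N|-6$, well below $2|N|-6$; this forces $N\times\{-1\}$ (and by symmetry $N\times\{1\}$) to be a block. A further argument in the style of Proposition~\ref{prop:cyclic trivial stabiliser}, using that $N\times\{1\}$ consists of the unique vertices with $|N|-2$ neighbours in $Nr_2^{-1}\times\{-1\}$, promotes $N\times\{-1,1\}$ to a block and dictates how any $\varphi\in A$ with $(1,-1)^\varphi=(1,1)$ must permute the named cosets.

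Applying the inductive hypothesis $\Aut(\Haar(N,S_N))_0=\hat N$, any element of $A_{(1,-1)}$ restricts to the identity on $N\times\{-1,1\}$. The singletons $\{r_1\}$ and $\{r_2r_1\}$ inside $S$, and the distinguished missing elements in $Nr_2\setminus\{r_2,n_1r_2\}$ and $Nr_2^{-1}\setminus\{r_2^{-1}\}$, now allow each coset-intersection of $S\times\{1\}$ to be identified as a neighbourhood-intersection of already-fixed vertices, fixing $S\times\{1\}$ pointwise under $A_{(1,-1)}$; symmetrically for $S^{-1}\times\{-1\}$ under $A_{(1,1)}$. Lemma~\ref{lem:done if nbrs fixed} then yields $\Aut(\Haar(R,S))_0=\hat R$.

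The main obstacle is the final step: ruling out a bipartition-swapping $\varphi\in A$. By Lemma~\ref{group automorphism}, such a $\varphi$ furnishes $\hat\varphi\in\Aut(R)$ and $x\in R$ with $S^{\hat\varphi^{-1}}=S^{-1}x^{-1}$. The profile of $S$ across $N$-cosets is asymmetric — $|S\cap Nr_2|=|N|-2$ while $|S\cap Nr_2^{-1}|=|N|-1$, whereas $S^{-1}$ has these counts reversed — so any such $\hat\varphi$ is forced to interchange the roles of $Nr_2$ and $Nr_2^{-1}$ in a rigid fashion. The delicate point is that $N$ need not be characteristic in $R$, so $\hat\varphi(N)$ could be a different normal subgroup $N'$ with $R/N'\cong R/N$; one must enumerate the possibilities for $\hat\varphi(N)$ and track the location of the singletons $\{r_1\}, \{r_2r_1\}$ relative to the near-full cosets, using the freedom to choose $n_1$ to break any residual symmetry, to arrive at the required contradiction.
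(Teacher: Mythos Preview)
Your outline tracks the paper up through Lemma~\ref{lem:done if nbrs fixed}, though two technical points need correction. First, your common-neighbour bound ``$\le 2|S_N|$'' for $x\in R\setminus N$ is false: if $x\in Nr_2^{2}$, then $S\cap Sx$ contains almost all of $Nr_2$, giving roughly $|N|$ common neighbours, not $2|S_N|$. The paper does a finer case analysis (on whether $Nx$ lies in $\langle Nr_2\rangle$ and at what power) and obtains bounds up to $|N|$, which is still below $2|N|-6$. Second, the paper establishes that \emph{every} coset $Nr\times\{\varepsilon\}$ is a block, not just $N\times\{\varepsilon\}$; this finer block system is what lets one identify $Nr_2^{-1}\times\{1\}$ as the unique block containing $|N|-1$ neighbours of $(1,-1)$, and is essential for pinning down $(r_1,1)$ versus $(r_2r_1,1)$.

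The real gap is your final step. You correctly flag that $N$ need not be characteristic, and then propose to ``enumerate the possibilities for $\hat\varphi(N)$'' and ``use the freedom to choose $n_1$''---but this is not an argument, and there is no evident way to complete it. The paper never invokes Lemma~\ref{group automorphism} here at all. Instead it argues combinatorially: a part-swapping $g$ with $(1,-1)^g=(1,1)$ must preserve the block system $\{Nr\times\{\varepsilon\}\}$ and the inter-block valencies, hence fixes $\langle Nr_2\rangle\times\{-1,1\}$ setwise. The two neighbours of $(1,-1)$ outside this set, namely $(r_1,1)$ and $(r_2r_1,1)$, must therefore map to the two neighbours of $(1,1)$ outside it. But then $(r_2r_1,1)$ must have two neighbours in $\langle Nr_2\rangle\times\{-1,1\}$; computing them shows one is $(r_1^{-1}r_2r_1,-1)$, forcing $r_1^{-1}r_2r_1\in\langle Nr_2\rangle$, i.e.\ $Nr_1$ normalises $\langle Nr_2\rangle$ in $R/N$---impossible since $R/N$ is non-abelian simple. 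This is the missing idea: exploit simplicity of $R/N$ via a normaliser obstruction inside the block structure, rather than chasing a group automorphism of $R$.
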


\begin{proof}
This proof follows a similar strategy to the proof of Proposition~\ref{prop:cyclic trivial stabiliser}. As in that proof, $\Haar(N,S_N)$ must be connected\footnote{By hypothesis $|S_N|\ge 4$. Now, Lemma~\ref{lem:disconnected} implies that $\mathrm{Haar}(N,S_N)$ is connected.}, as is the entire graph $\Haar(R,S)$. So $\{R\times \{-1\},R \times \{1\}\}$ is an invariant partition of $A=\Aut(\Haar(R,S))$.

Since $4 \le |S_N| \le (|N|-6)/2$ we must have $|N|\ge 14$.

For every $r\in R$ and for every $\varepsilon\in \{-1,1\}$,
 any two vertices in $Nr \times \{\varepsilon\}$ have at least $2|N|-6$ common neighbours: there are at least $|N|-2$ common neighbours in  $Nr_2^{\varepsilon} r \times \{-\varepsilon\}$ and at least $|N|-4$ common neighbours in $N r_2^{-\varepsilon} r\times\{-\varepsilon\}$. 

Given $r\in R\setminus N$, we determine an upper bound on the number of common neighbours of $(1,-1)$ and $(r,-1)$. If $Nr \notin \langle Nr_2\rangle$, then in any set $Nr' \times \{1\}$, at least one of $(1,-1)$ and $(r,-1)$ has at most one neighbour in $Nr'\times\{1\}$. Since $(1,-1)$ has neighbours in only five sets of this form, the two vertices certainly have no more than $5<2|N|-6$\footnote{This inequality follows from the fact that $|N|\ge 14$.} common neighbours. We may now assume $Nr \in \langle Nr_2 \rangle$. If $Nr \notin \{r_2^iN: i\in\{-1,1,-2,2\}\}$, then $(1,-1)$ and $(r,-1)$ have at most $2$ common neighbours. If $Nr \in \{Nr_2,Nr_2^{-1}\}$, then since $\mathbf{o}(r_2N)\ge 5$, $(1,-1)$ and $(r,-1)$ have at most $2|S_N|$ common neighbours in $\langle Nr_2 \rangle \times \{1\}$, and possibly up to two more outside this set, for a total of at most $2|S_N|+2<2|N|-6$\footnote{This inequality follows from 
$|S_N|\le (|N|-6)/2$ and $|N|\ge 14$.}. If $rN \in \{Nr_2^{2},Nr_2^{-2}\}$, then since $\mathbf{o}(r_2N)\ge 5$, $(1,-1)$ and $(r,-1)$ have at most $|N|-2$ common neighbours in $\langle r_2N \rangle \times \{1\}$, and possibly up to two more outside this set, for a total of at most $|N|<2|N|-6$.
 
The previous two paragraphs show that  $N \times \{-1\}$ is a block of imprimitivity for $A$. A similar argument implies that $Nr \times \{\varepsilon\}$ is a block of imprimitivity for $A$, for every $r\in R$ and for every $\varepsilon\in \{-1,1\}$.

We can also see that among these blocks, all of the neighbours of $(1,-1)$ lie in $N\times \{1\}$, $Nr_1 \times \{1\}$, $Nr_2 \times \{1\}$, $Nr_2^{-1} \times \{1\}$, and $Nr_2r_1 \times \{1\}$. Furthermore, among these only $N \times \{1\}$ has $|S_N|$ neighbours of $(1,-1)$\footnote{The numbers of neighbours of $(1,-1)$ in the various sets are $|S_N|$, $1$, $|N|-2$, $|N|-1$, and $1$ respectively. As $|N|\ge 14$ and $|S_N|\le (|N|-6)/2$, we have $|S_N|\notin \{|N|-1,|N|-2\}$. By hypothesis, $|S_N|\ge 4$ and hence $|S_N|\ne 1$.} This implies that $N\times \{-1,1\}$ is a block of imprimitivity for $A$.

In particular, $A_{(1,-1)}$ fixes  $N\times\{-1,1\}$ setwise. Since the subgraph induced by $\mathrm{Haar}(R,S)$ on $N\times\{-1,1\}$ is $\Haar(N, S_N)$ and since $\mathrm{Aut}(\mathrm{Haar}(N,S_N))=\hat N$, we deduce that $A_{(1,-1)}$ fixes $N\times\{1,-1\}$ pointwise.
Also, since $Nr_2^{-1} \times \{1\}$ is the unique block that has $|N|-1$ neighbours of $(1,-1)$, it must be fixed setwise by $A_{(1,-1)}$. Since each vertex in $Nr_2^{-1}\times\{1\}$ has a unique  non-neighbour in $N \times \{-1\}$ and every such non-neighbour is fixed by $A_{(1,-1)}$, each vertex of $Nr_2^{-1} \times \{1\}$ is also  fixed by $A_{(1,-1)}$. Since $A_{(1,-1)}$ fixes $Nr_2^{-1}\times\{1\}$ pointwise, conjugating by $(r_2^{-1},r_2^{-1})\in \hat R$, we deduce that $$A_{(1,-1)=}A_{(r_2^{-1},-1)}=A_{(1,-1)}^{(r_2^{-1},r_2^{-1})}$$ fixes $(Nr_2^{-1}\times\{1\})^{(r_2^{-1},r_2^{-1})}=Nr_2^{-2}\times\{1\}$ pointwise. An inductive argument yields that $A_{(1,-1)}$ fixes $Nr_2^j\times\{-1\}$ pointwise, for every $j$.

We can make an exactly parallel argument to show that $A_{(1,1)}$ fixes $(N \times \{1,-1\})$ and $Nr_2^j\times \{-1\}$ pointwise, for every $j$.

Since $Nr_1 \times \{1,-1\}$ and $Nr_2r_1 \times \{1,-1\}$ are blocks for $A$ and contain the remaining neighbours $(r_1,1)$ and $(r_2r_1,1)$ of $(1,-1)$, $A_{(1,-1)}$ must either fix each setwise, or interchange them. Suppose that there exists $g\in A_{(1,-1)}$ with $$(Nr_1\times\{-1\})^g=Nr_2r_1\times\{-1\}\hbox{ and }(Nr_2r_1\times\{-1\})^g=Nr_1\times\{-1\}.$$ In particular, $(r_1,1)^g=(r_2r_1,1)$ and $(r_2r_1,1)^g=(r_1,1)$. Since $Nr_1\times\{-1,1\}$ and $Nr_2r_1\times\{-1,1\}$ are  blocks of imprimitivity, we have 
$(Nr_1\times\{1\})^g=Nr_2r_1\times\{1\}$ and $(Nr_2r_1\times\{1\})^g=Nr_1\times\{1\}$.
However, these vertices can be distinguished because $(r_1,1)$ has $|N|-1$ neighbours in $Nr_2r_1 \times \{-1\}$, while $(r_2r_1,1)$ has only $|N|-2$ neighbours in $Nr_1 \times \{-1\}$. Therefore, we cannot have $(r_1,1)^g=(r_2r_1,1)$ and $(Nr_2r_1\times\{-1\})^g=Nr_1\times\{-1\}$. This contradiction has shown that $A_{(1,-1)}$ fixes each of $Nr_1\times\{1\}$ and $Nr_2r_1\times\{1\}$ setwise.
Therefore, $(r_1,1)$ and $(r_2r_1,1)$ must be fixed by $A_{(1,-1)}$.  Summing up, we have shown that $A_{(1,-1)}$ fixes $S\times\{1\}$ pointwise.

We must use a different trick to distinguish $(r_1^{-1},-1)$ from $(r_1^{-1}r_2^{-1},-1)$ under the action of $A_{(1,1)}$. Each has exactly two neighbours in $\langle Nr_2 \rangle \times \{1\}$, one of which is $(1,1)$. For $(r_1^{-1},-1)$, the other is $(r_2,1)$. For $(r_1^{-1}r_2^{-1},-1)$, the other neighbour is $(r_2^{-1},-1)$. As we have already shown that $A_{(1,1)}$ fixes $(r_2,-1)$ and $(r_2^{-1},-1)$, $A_{(1,1)}$ must also fix $(r_1^{-1},-1)$ and $(r_1^{-1}r_2^{-1},-1)$. We have now shown that $A_{(1,1)}$ fixes $S^{-1} \times \{-1\}$ pointwise. Therefore Lemma~\ref{lem:done if nbrs fixed} yields $\Aut(\Haar(R,S_1))_0=\hat R$.

Let $g\in A\setminus \hat R$. Then $(1,-1)^g\in R\times\{1\}$. Replacing $g$ if necessary, we may suppose that $(1,-1)^g=(1,1)$. As $N\times\{-1,1\}$ is a block of imprimitivity for $A$ and as $g$ maps a vertex in $N\times\{-1,1\}$ to another vertex in $N\times\{-1,1\}$, we deduce that $g$ fixes $N\times\{-1,1\}$ setwise.   Arguments made above using the numbers of neighbours between blocks imply that any element of $A$ that fixes $N \times\{1,-1\}$ setwise, must fix $\langle Nr_2 \rangle \times \{1,-1\}$ setwise. Since $(1,-1)$ has exactly two neighbours outside this set: namely, $(r_1,1)$ and $(r_2r_1,1)$, these must be mapped by $g$ to the only two neighbours of $(1,1)$ outside $\langle Nr_2 \rangle \times \{1,-1\}$: namely, $(r_1^{-1},-1)$ and $(r_1^{-1}r_2^{-1},-1)$. Thus
$$\{(r_1,1),(r_2r_1,1)\}^g=\{(r_1^{-1},-1),(r_1^{-1}r_2^{-1},-1)\}.$$We have also just observed in the preceding paragraph that each of $(r_1^{-1},-1)$ and $(r_1^{-1}r_2^{-1},-1)$ has two neighbours in $\langle Nr_2 \rangle \times \{1,-1\}$. Therefore each of $(r_1,1)$ and $(r_2r_1,1)$
 must also have two neighbours in $\langle Nr_2\rangle \times \{1,-1\}$. All but two of the neighbours of $(r_2r_1,1)$ have a first coordinate that lies in $r_2r_1(r_2^iN)^{-1}=r_2^{1-i}r_1N$ for some $i$; none of these can be in $\langle Nr_2\rangle$. The remaining neighbours of $(r_2r_1,1)$ are $(1,-1)$, and $(r_1^{-1}r_2r_1,-1)$. So  $r_1^{-1}r_2r_1$ belongs to the set $\langle Nr_2\rangle$. In other words, modulo $N$, $r_1$ normalises $\langle r_2\rangle$; however this contradicts $R/N$ being non-abelian simple.
 
We conclude by bounding the cardinality of $|S|$. Since $4 \le |S_N|$ and $S_N \subset S$, we certainly have $|S|\ge 4$. Since $R/N$ is nonabelian simple we have $|R:N| \ge 60$, so $|S|=|S_N|+|N|-1+|N|-2+2 < 3|N|-1< (|R|-6)/2$.
\end{proof}

\begin{corollary}\label{cor:HRR-induct}
Let $N$ be a non-identity proper normal subgroup of $R$ with  $R/N$ cyclic or  non-abelian simple. Suppose that there is some $S_N \subseteq N$ with $4 \le |S_N| \le (|N|-6)/2$, such that $\Aut(\Haar(N,S_N))_0=\hat N$. Then there is some $S_R \subseteq R$ with $4\le |S_R| \le (|R|-6)/2$ such that $\Aut(\Haar(R,S_R))_0=\hat R$.

Furthermore, if $\Aut(\Haar(N,S_N))=\hat N$ or if $R/N$ is non-abelian simple, then $\Aut(\Haar(R,S_R))=\hat R$. 
\end{corollary}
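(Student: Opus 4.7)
The plan is to split on the two cases in the hypothesis, $R/N$ cyclic or $R/N$ non-abelian simple, and in each case directly invoke one of the two inductive propositions already established in this section. The hypotheses on $|S_N|$ and on $\Aut(\Haar(N,S_N))_0$ are precisely what those propositions require, so there is essentially no work to do beyond choosing the right parameters.

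When $R/N$ is cyclic, I choose any $r \in R$ with $rN$ generating $R/N$, so that $R = \langle N, r\rangle$. Proposition~\ref{prop:cyclic trivial stabiliser} then produces subsets $S_1, S_2 \subseteq R$ with $\Aut(\Haar(R,S_i))_0 = \hat R$ for $i \in \{1,2\}$. Its cardinality clause says that $S_1$ satisfies $4 \le |S_1| \le (|R|-6)/2$ when $|R:N| \ge 3$, while $S_2$ does when $|R:N| = 2$; I set $S_R$ to be whichever of these is applicable.

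When $R/N$ is non-abelian simple, I apply Lemma~\ref{lemma:Rsimple} to $R/N$ to obtain generators $\bar s, \bar t$ of $R/N$ with $\mathbf{o}(\bar s) \ge 5$, lift them to $r_1, r_2 \in R$ with $r_1 N = \bar t$ and $r_2 N = \bar s$ (so $R = \langle N, r_1, r_2\rangle$), and pick any non-identity $n_1 \in N$. Feeding this data into Proposition~\ref{prop:not cyclic trivial stabiliser} yields a set $S_R \subseteq R$ with the stronger conclusion $\Aut(\Haar(R,S_R)) = \hat R$ (the full automorphism group, not just its bipartition-stabilising subgroup) and $4 \le |S_R| \le (|R|-6)/2$. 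This handles both the main statement and the ``furthermore'' clause in this case simultaneously.

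It remains to establish the ``furthermore'' in the cyclic case under the stronger hypothesis $\Aut(\Haar(N,S_N)) = \hat N$. Suppose for contradiction that $\Aut(\Haar(R,S_R)) \ne \hat R$. Since we already know $\Aut(\Haar(R,S_R))_0 = \hat R$, there must be some $h \in \Aut(\Haar(R,S_R))$ exchanging the two parts of the natural bipartition, and composing with a suitable $\rho_{r^{-1}} \in \hat R$ produces an element $g$ with $(1,-1)^g = (1,1)$. The final clause of Proposition~\ref{prop:cyclic trivial stabiliser} then tells us that $g$ preserves $N\times\{-1,1\}$ setwise and restricts there to an automorphism of $\Haar(N,S_N)$ sending $(1,-1)$ to $(1,1)$, hence exchanging the two parts of the bipartition of $\Haar(N,S_N)$. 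But $\hat N$ fixes that bipartition, so the assumption $\Aut(\Haar(N,S_N)) = \hat N$ forbids any such automorphism, a contradiction. The main technical obstacles have already been overcome inside the two preceding propositions; the corollary itself is essentially a packaging exercise, and the only subtlety worth flagging is the small diagram-chase in this last paragraph showing that the strong HGR property on $N$ propagates to $R$ through the bipartition-swap clause of Proposition~\ref{prop:cyclic trivial stabiliser}.
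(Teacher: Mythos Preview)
Your proof is correct and follows essentially the same approach as the paper's: split on whether $R/N$ is cyclic or non-abelian simple, invoke Proposition~\ref{prop:cyclic trivial stabiliser} (choosing $S_1$ or $S_2$ according to the index) or Proposition~\ref{prop:not cyclic trivial stabiliser} respectively, and for the ``furthermore'' in the cyclic case use the bipartition-swap clause of Proposition~\ref{prop:cyclic trivial stabiliser} to push a hypothetical extra automorphism down to $\Haar(N,S_N)$ for a contradiction. The only cosmetic point is that in your last paragraph the element you compose with should be $\rho_{x^{-1}}$ for the specific $x$ with $(1,-1)^h=(x,1)$, not the generator $r$; otherwise this matches the paper's argument.
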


\begin{proof}
If $R/N$ is cyclic, then take $S_R=S_i$ from Proposition~\ref{prop:cyclic trivial stabiliser}, with $i=1$ if $|R:N|>2$ and $i=2$ if $|R:N|=2$. If $R/N$ is non-abelian simple, then take $S_R=S$ from Proposition~\ref{prop:not cyclic trivial stabiliser}.

Taking $A=\Aut(\Haar(R,S_R))$, Proposition~\ref{prop:cyclic trivial stabiliser} or~\ref{prop:not cyclic trivial stabiliser} gives $A_0=\hat R$. Furthermore, if $A>\hat R$, then $R/N$ is cyclic and there is some $g \in A$ such that $(1-1)^g =(1,1)$, and $g$ induces an automorphism of the induced subgraph on $N\times\{-1,1\}$ (which is $\Haar(N,S_N)$). This is not possible if $\Aut(\Haar(N,S_N))=\hat N$, so in this case $A=\hat R$.
\end{proof}

Before proving Proposition~\ref{prop:one more time with feeling}, which constitutes our last inductive tool, we need an auxiliary technical remark.
\begin{lemma}\label{moura}Let $R$ be a non-abelian finite group, let $N$ be an abelian normal subgroup of $R$ with $|R:N|=2$ and let $r\in R\setminus N$. Then one of the following holds
\begin{enumerate}
\item\label{moura1}there exists a proper subgroup $N_1$ of $N$, $n_2\in N\setminus N_1$ and $n_1\in N_1\setminus\{1\}$ with $N=\langle N_1,n_2\rangle$ and $ rn_2r^{-1}n_2^{-1}\notin \{1,n_1\}$,
\item\label{moura2}$R$ is a dihedral group of order $2p$, for some prime $p$, or
\item\label{moura3}$R$ is the dihedral group of order $8$ or the quaternion group of order $8$.
\end{enumerate}
\end{lemma}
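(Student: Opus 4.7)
The plan is to split into cases by the order of $N$. Let $\phi\in\Aut(N)$ denote the conjugation action of $r$ on $N$; since $r^2\in N$ and $N$ is abelian, $\phi$ is an involution, and $\phi\ne 1$ because $R$ is non-abelian.

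The small cases are handled first. If $|N|$ is prime, then $|N|=p$ must be odd (otherwise $|R|=4$ would be abelian), and the only non-trivial involution in $\Aut(C_p)$ is inversion; hence $R$ is a non-abelian group of order $2p$, i.e.\ $R\cong D_{2p}$, giving~\eqref{moura2}. If $|N|=4$, then $|R|=8$ and $R$ is non-abelian, so $R\cong D_8$ or $R\cong Q_8$, giving~\eqref{moura3}.

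For the main case, assume $|N|$ is neither prime nor $4$; we aim for~\eqref{moura1}. The crucial observation is that $N\setminus\Phi(N)$ generates $N$, where $\Phi(N)$ is the Frattini subgroup: indeed, if $N\setminus\Phi(N)$ were contained in a proper subgroup $H$ of $N$, then $H\le M$ for some maximal subgroup $M$, and combining with $\Phi(N)\le M$ would yield $N\le M$, absurd. Consequently, if $\phi$ fixed every element of $N\setminus\Phi(N)$, it would fix $N$ pointwise, contradicting $\phi\ne 1$. Thus I can select $n_2\in N\setminus\Phi(N)$ with $\phi(n_2)\ne n_2$, and then pick a maximal subgroup $N_1$ of $N$ not containing $n_2$. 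Maximality of $N_1$ together with $n_2\notin N_1$ gives $N=\langle N_1,n_2\rangle$, and since $|N|$ is not prime, $N_1\ne\{1\}$.

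It remains to produce $n_1\in N_1\setminus\{1\}$ with $n_1\ne[r,n_2]=\phi(n_2)n_2^{-1}$. If $|N_1|\ge 3$ this is immediate, since $|N_1\setminus\{1\}|\ge 2$. The one situation requiring attention, which I expect to be the main technical obstacle, is $|N_1|=2$: here $|N|=2p$ for some prime $p$, and since $|N|\ne 4$, $p$ is odd, whence $N\cong C_{2p}$. The unique non-trivial involution of $\Aut(C_{2p})$ is inversion, so $\phi(n)=n^{-1}$ for all $n\in N$ and $[r,n_2]=n_2^{-2}$. Because $n_2\notin N_1$ and $\phi(n_2)\ne n_2$, the element $n_2$ has order $p$ or $2p$, so $n_2^{-2}$ has order $p$; but the unique non-identity element of $N_1$ has order $2\ne p$, so $[r,n_2]\ne n_1$, and~\eqref{moura1} follows.
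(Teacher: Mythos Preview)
Your argument is correct. The overall shape is close to the paper's, but you organise the case split and the choice of $N_1,n_2$ differently, and the difference is worth noting. The paper first fixes $N_1$ as a maximal subgroup of $N$ containing the centraliser $C=\mathbf{C}_N(r)$; then any $n_2\in N\setminus N_1$ automatically satisfies $[r,n_2]\neq 1$, and the remaining work is to secure $n_1$. You instead select $n_2$ first, using the Frattini subgroup: since $N\setminus\Phi(N)$ generates $N$, some element of it is moved by $\phi$, and that element lies outside a suitable maximal $N_1$. These are dual strategies for achieving the same pair of constraints ($n_2$ non-central and $N_1$ maximal with $n_2\notin N_1$), and each makes one constraint automatic while the other is arranged by hand.

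In the boundary case $|N_1|=2$ the two proofs again run in parallel. The paper re-selects $n_2$ to be a generator $x$ of the Sylow $p$-subgroup of $N\cong C_{2p}$ and argues that $r$ inverts $x$, so $[r,x]=x^{-2}\notin N_1$. You keep your original $n_2$ and instead observe globally that $\Aut(C_{2p})$ has a unique involution, namely inversion, so $\phi$ inverts everything and $[r,n_2]=n_2^{-2}$ has odd prime order; this is a slightly slicker way to reach the same conclusion. Neither approach gains real leverage over the other: both are elementary and of comparable length, and both rely on the same structural fact that in $C_{2p}$ with $p$ odd the commutator $[r,n_2]$ lands in the subgroup of order $p$ rather than the subgroup of order~$2$.
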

\begin{proof}
Let $C={\bf C}_N(r)$. As $R=\langle N,r\rangle$ is non-abelian and as $N$ is abelian, $C$ is a proper subgroup of $N$. Let $N_1$ be a maximal subgroup of $N$ containing $C$. In particular, $|N:N_1|=p$, for some prime number $p$. Also, $C \le N_1$ implies $1 \neq rn_2r^{-1}n_2^{-1}$.

If $N_1=1$, then $N$ is a cyclic group of order $p$ and hence $R$ is a dihedral group, thus part~\eqref{moura2} is satisfied. Therefore, suppose $N_1\ne 1$.

Let $n_2\in N\setminus N_1$. Since $N_1$ is maximal in $N$, we have $N=\langle N_1,n_2\rangle$. If $|N_1|\ge 3$, then there exists $n_1\in N_1$ with $n_1\ne 1$ and $n_1\ne rn_2r^{-1}n_2^{-1}$, thus part~\eqref{moura1} is satisfied. Therefore, suppose $|N_1|=2$. As $C\le N_1$, we deduce $N_1=C={\bf C}_N(r)$.

Assume $p$ odd and let $P=\langle x\rangle$ be a Sylow $p$-subgroup of $N$. Clearly, $N=\langle N_1,x\rangle$. Moreover, $rxr^{-1}=x^{-1}$ and hence $rxr^{-1}x^{-1}=x^{-2}\notin N_1$. Therefore, part~\eqref{moura1} is satisfied by taking $n_2=x$ and $n_1$ the non-identity element of $N_1$.

Finally, assume $p=2$. Thus $|N|=4$ and hence $|R|=8$. Since $R$ is non-abelian, we deduce that part~\eqref{moura3} is satisfied.
\end{proof}

\begin{proposition}\label{prop:one more time with feeling}
Let $R$ be a non-abelian group, let $N$ be an abelian subgroup of $R$ with $|R:N|=2$ and let $r\in R\setminus N$. Suppose that $R$ is neither a dihedral group of order $8$, nor of order $2p$ for some prime $p$, nor a quaternion group. Let $N_1,n_1$ and $n_2$ be as in Lemma~$\ref{moura}$ part~$\eqref{moura1}$.
 Suppose  there exists $S_{N_1} \subseteq N_1$ with $4 \le |S_{N_1}| \le (|N_1|-6)/2$ and $\Aut(\Haar(N_1, S_{N_1}))_0=\hat{N_1}$.

Let $$S=S_{N_1} \cup (n_2N_1\setminus\{n_2\})\cup \{r,n_2r,n_1n_2r\}.$$
Then $\Aut(\Haar(R,S))=\hat R$, and $4 \le |S| \le (|R|-6)/2$. 
\end{proposition}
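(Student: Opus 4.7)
The plan is to follow the inductive template of Propositions~\ref{prop:cyclic trivial stabiliser} and~\ref{prop:not cyclic trivial stabiliser}. Set $A=\Aut(\Haar(R,S))$. First, $|S_{N_1}|\ge 4$ combined with Lemma~\ref{lem:disconnected} gives that $\Haar(N_1,S_{N_1})$ is connected, whence $\langle S_{N_1}^{-1}S_{N_1}\rangle=N_1$. Since $(n_2N_1\setminus\{n_2\})^{-1}(n_2N_1\setminus\{n_2\})$ again generates $N_1$ and $r\in S$ lies outside $N$, $\langle S^{-1}S\rangle=R$, so Lemma~\ref{connection2} yields that $\Haar(R,S)$ is connected. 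Thus $\{R\times\{-1\},R\times\{1\}\}$ is an invariant partition for $A$. For $n\in N_1\setminus\{1\}$, the vertices $(1,-1)$ and $(n,-1)$ share at least $|N_1|-2$ common neighbours, one for each element of $(n_2N_1\setminus\{n_2,n_2n\})\times\{1\}$. A coset-by-coset common-neighbour count, using the bound $|S_{N_1}|\le(|N_1|-6)/2$, shows that for any $x\in R\setminus N_1$ the vertices $(1,-1)$ and $(x,-1)$ share strictly fewer common neighbours. Therefore $N_1\times\{-1\}$, and by $\hat R$-translation each $N_1 g\times\{\varepsilon\}$ as well as $N_1 g\times\{-1,1\}$, is a block for $A$.

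Applying the induction hypothesis $\Aut(\Haar(N_1,S_{N_1}))_0=\hat{N_1}$ to the induced subgraph on the block $N_1\times\{-1,1\}$, $A_{(1,-1)}$ fixes $N_1\times\{-1,1\}$ pointwise. The block $n_2N_1\times\{1\}$ is the unique $N_1$-coset-block containing $|N_1|-1$ neighbours of $(1,-1)$, so it is fixed setwise by $A_{(1,-1)}$; its unique non-neighbour of $(1,-1)$ is $(n_2,1)$ and hence is fixed, while each $(n_2 n',1)$ with $n'\in N_1\setminus\{1\}$ is pinned by its unique non-neighbour $(n',-1)\in N_1\times\{-1\}$. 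The vertex $(r,1)$ is alone in its $N_1$-coset block and is therefore fixed. The two vertices $(n_2r,1)$ and $(n_1n_2r,1)$, lying in the common block $n_2N_1r\times\{1\}$, can be distinguished by comparing their common-neighbour counts with already-fixed vertices in $N\times\{-1,1\}$. A symmetric argument at $(1,1)$ together with Lemma~\ref{lem:done if nbrs fixed} yields $A_0=\hat R$.

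The main obstacle is to rule out an automorphism $\varphi\in A$ swapping the bipartition, say $(1,-1)^\varphi=(1,1)$. By Lemma~\ref{group automorphism}, there is $\hat\varphi\in\Aut(R)$ with $(g,-1)^\varphi=(g^{\hat\varphi},1)$. Since $N_1$ is abelian, $\Aut(\Haar(N_1,S_{N_1}))=\hat{N_1}\rtimes\langle\iota_{N_1}\rangle$, and the restriction of $\varphi$ to the block $N_1\times\{-1,1\}$ must equal $\iota_{N_1}$; consequently $\hat\varphi|_{N_1}$ is inversion and $\varphi^2=\mathrm{id}$. Tracing $\hat\varphi$ through the $N_1$-coset-blocks, using the uniqueness of $n_2^{-1}N_1\times\{-1\}$ as the block with $|N_1|-1$ neighbours of $(1,1)$ and matching unique non-neighbours, forces $\hat\varphi(n_2)=n_2^{-1}$, so $\hat\varphi|_N$ is inversion. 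Writing $\hat\varphi(r)=r^{-1}m$ with $m\in\{1,n_2^{-1},n_1^{-1}n_2^{-1}\}$ (since $\hat\varphi(r)\in S^{-1}$), a case analysis combining the commutator identity $rn_2r^{-1}=cn_2$ (where $c=rn_2r^{-1}n_2^{-1}$), the homomorphism relation $\hat\varphi(n_2r)=\hat\varphi(n_2)\hat\varphi(r)$, and the involutive constraint $rmr^{-1}=m^{-1}$ forces in each subcase either $c\in\{1,n_1\}$ (contradicting the choice supplied by Lemma~\ref{moura} part~\eqref{moura1}), or $n_1=1$, or $n_2=1$, or an incompatibility with $n_2\notin N_1$. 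Hence no such $\varphi$ exists and $A=\hat R$. Finally, $|S|=|S_{N_1}|+|N_1|+2\ge 4$, and since $|R|\ge 4|N_1|$ we get $|S|\le \tfrac{3|N_1|}{2}-1 \le \tfrac{|R|-6}{2}$.
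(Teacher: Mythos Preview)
Your overall template matches the paper's, but two steps are handled differently and one of them has a real gap.

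First, you assert that $(n_2r,1)$ and $(n_1n_2r,1)$ ``can be distinguished by comparing their common-neighbour counts with already-fixed vertices in $N\times\{-1,1\}$.'' At this point you have only established that $A_{(1,-1)}$ fixes $N_1\times\{-1,1\}$ and $n_2N_1\times\{1\}$ pointwise, not all of $N\times\{-1,1\}$; the jump from $N_1$ and one further coset to all of $N$ is not justified. (It can be justified by iterating your $n_2N_1$-argument through the blocks $N_1n_2^k\times\{-1,1\}$, using that each carries an induced copy of $\Haar(N_1,S_{N_1})$, but you do not say this.) The paper sidesteps the whole issue: it observes that the edges coming from $\{n_2r,n_1n_2r\}$ are exactly those joining a block $N_1r'\times\{-1\}$ to the unique block containing precisely two of its neighbours, hence form an $A$-invariant set; removing them yields $\Haar(R,S')$ with $S'=S\setminus\{n_2r,n_1n_2r\}$, and for $S'$ one only needs to pin down $(r,1)$. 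Thus $A\le\Aut(\Haar(R,S'))$ and $A_0\le\Aut(\Haar(R,S'))_0=\hat R$, with no need ever to separate $(n_2r,1)$ from $(n_1n_2r,1)$.

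Second, in ruling out a part-swapping $\varphi$, you allow three possibilities $\hat\varphi(r)\in\{r^{-1},\,r^{-1}n_2^{-1},\,r^{-1}n_1^{-1}n_2^{-1}\}$ and promise a case analysis. The paper avoids this: since $(r,1)$ is the unique neighbour of $(1,-1)$ lying alone in its $N_1$-coset block, and $(r^{-1},-1)$ plays the same role for $(1,1)$, one gets $(r^{-1},-1)^\varphi=(r,1)$ and hence $\hat\varphi(r)=r^{-1}$ immediately. The remaining two neighbours of $(1,-1)$ outside $N\times\{1\}$ then map to the remaining two neighbours of $(1,1)$ outside $N\times\{-1\}$, giving $(r^{-1}n_2^{-1})^{\hat\varphi}\in\{n_2r,\,n_1n_2r\}$; with $(r^{-1})^{\hat\varphi}=r$ and $(n_2^{-1})^{\hat\varphi}=n_2$ this forces $rn_2\in\{n_2r,\,n_1n_2r\}$, i.e.\ $rn_2r^{-1}n_2^{-1}\in\{1,n_1\}$, the desired contradiction. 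Your three-case analysis is not only longer but, as written, incomplete: for instance the subcase $\hat\varphi(r)=r^{-1}n_2^{-1}$ together with $\hat\varphi^2=\mathrm{id}$ yields only $rn_2r^{-1}=n_2^{-1}$, which by itself does not contradict $rn_2r^{-1}n_2^{-1}\notin\{1,n_1\}$.
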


\begin{proof}
Again we follow the same template as in the proof of Proposition~\ref{prop:cyclic trivial stabiliser}. As in that proof, $\Haar(N_1,S_{N_1})$ is connected, as is the entire graph $\Haar(R,S)$. So $\{R\times \{-1\},R \times \{1\}\}$ is a system of imprimitivity for $A=\Aut(\Haar(R,S))$.

For every $r'\in R$ and for every $\varepsilon\in\{-1,1\}$, as in the  proof of Propositions~\ref{prop:cyclic trivial stabiliser} and~\ref{prop:not cyclic trivial stabiliser}, any two vertices in the same set $N_1r' \times \{\varepsilon\}$\footnote{Observe that $N_1$ may not be normal in $R$, so we must be particularly careful about left and right cosets. However, $N_1$ is normal in $N$, which will be useful.} and with the same second coordinate have many common neighbours, in this case at least the $|N_1|-2$ common neighbours that lie in $N_1n_2^{-\varepsilon}r' \times \{-\varepsilon\}$.

Let $x\in R\setminus N_1$. We count the common neighbours of $(1,\varepsilon)$ and $(x,\varepsilon)$.  If $x \notin N$, then since any vertex in $Nr' \times \{\varepsilon\}$ has at most $3$ neighbours outside $Nr' \times \{-\varepsilon\}$, $(1,\varepsilon)$ and $(x,\varepsilon)$ have at most $6$ common neighbours\footnote{at most $3$ in $N \times \{-\varepsilon\}$ and at most $3$ in $Nx \times \{-\varepsilon\}$}. If $x\in N\setminus N_1$, then $(x,\varepsilon)$ and $(1,\varepsilon)$ have at most $2|S_{N_1}|+3$ common neighbours\footnote{at most $|S_{N_1}|$ in each of $N_1 \times \{-\varepsilon\}$ and $N_1x \times \{-\varepsilon\}$, and at most $3$ common neighbours outside $N \times \{-\varepsilon\}$}. 

Since $|S_{N_1}|\ge 4$, $2|S_{N_1}|+3$ is an upper bound on the number of common neighbours of two vertices that are not in the same set $N_1r' \times \{\varepsilon\}$, for some $r'\in R\setminus N_1$ and some $\varepsilon \in \{-1,1\}$.

Since $|S_{N_1}| \le (|N_1|-6)/2$, we have $$2|S_{N_1}|+3 < |N_1|-2,$$ so each set $N_1r'\times \{\varepsilon\}$  is a block for $A$,  for every $r' \in R$ and for every $\varepsilon \in \{-1,1\}$. 

We  see that  almost all of the neighbours of any vertex in $N_1r' \times \{-1\}$ lie in $Nr'\times \{1\}$, with a total of $3$ others appearing in the blocks that lie inside $Nrr' \times \{1\}$ and $Nr^{-1}r' \times \{1\}$. In fact, we can say more about these three other neighbours. Suppose $(nr',-1) \in N_1r' \times \{-1\}$. Then the three ``other" neighbours of this vertex are $(rnr',1)$, $(n_2rnr',1)$, and $(n_1n_2rnr',1)$. If we choose $r'' \in R$ so that $rnr' \in N_1r''$, then $(rnr',1) \in  N_1r''\times \{1\}$, but  $(n_2rnr',1),(n_1n_2rnr',1) \in n_2N_1r''\times\{1\}$, and $n_2N_1r''=N_1n_2r'' \neq N_1r''$. Therefore, any vertex in   $N_1r' \times \{-1\}$ has $|S_{N_1}|$ neighbours in $N_1r'\times \{1\}$, $|N_1|-1$ neighbours in $N_1n_2r' \times \{1\}$, $2$ neighbours in $N_1n_2r'' \times \{1\}$, and a final neighbour in $N_1r''\times \{1\}$. 
By  our hypothesis on $|S_{N_1}|$, none of the other values is equal to $|S_N|$. This implies that $N_1r'\times \{-1,1\}$ is a block of $A$.

As in previous proofs, if  $g \in A_{(1,-1)}$ then $g$ fixes $(1,-1)$ and fixes $N_1 \times \{-1,1\}$ setwise, so since the induced subgraph on $N_1\times \{-1,1\}$ is $\Haar(N_1, S_{N_1})$, $g$ must fix $N_1 \times \{-1,1\}$ pointwise. Since we showed in the previous paragraph that $(r,1)$ is the only neighbour of $(1,-1)$ that is a unique neighbour of $(1,-1)$ in some right coset of $N_1$, we must have $(r,1)^g=(r,1)$. Similarly, $N_1 \times \{-1,1\}$ and $(r^{-1},-1)$ are fixed pointwise by $A_{(1,1)}$. 

At this point, looking back through everything we have determined so far in this proof, we observe that if $S'=S\setminus\{n_2r,n_1n_2r\})$ and $B=\Aut(\Haar(R,S')$ then $B$ has the same blocks we have found for $A$. Also, $B_{(1,-1)}$ fixes $S'\times \{1\}$ pointwise, and $B_{(1,1)}$ fixes $S^{-1} \times \{-1\}$ pointwise. Therefore by Lemma~\ref{lem:done if nbrs fixed}, we have $B_0=\hat R$. Since we have shown that in $\Haar(R,S)$ we can distinguish edges that arise from $\{n_2r,n_1n_2r\}$ from all of the other edges, we must have $A \le B$.
Therefore we must have $A_0=\hat R$. 

We still need to show that  $A=A_0$. 

Suppose that there is some $\varphi \in A$ such that $(1,-1)^\varphi=(1,1)$. For each $\varepsilon \in \{-1,1\}$,  $N_1 \times \{\varepsilon\}$ is a block of $A$, and $N_1 \times \{-1,1\}$ is a block of $A$. Therefore $\varphi$ induces an automorphism of $\Haar(N_1, S_{N_1})$. Since $N_1$ is abelian and $\Aut(\Haar(N_1,S_{N_1}))_0=\hat{N_1}$, the only possibility is that $\hat \varphi$ from Lemma~\ref{group automorphism} must act as inversion on $N_1$. Also $(1,1)^\varphi=(1,-1)$. Now, $N_1r^{-1}\times \{-1\}$  is the only block that has exactly one neighbour of $(1,1)$, and that neighbour is $(r^{-1},-1)$. Similarly, $N_1r\times \{1\}$ is the only block that has exactly one neighbour of $(1,-1)$, and that neighbour is $(r,1)$. Therefore $(r^{-1},-1)^\varphi=(r,1)$, and accordingly $(r^{-1})^{\hat\varphi}=r$. Finally, the only two remaining neighbours of $(1,1)$ must map to the only two remaining neighbours of $(1,-1)$, so $\{(r^{-1}n_2^{-1},-1),(r^{-1}n_2^{-1}n_1^{-1},-1)\}^\varphi=\{(n_2r,1),(n_1n_2r,1)\}$. 

 If $(r^{-1}n_2^{-1})^{\hat\varphi}=n_2r$ then since $(r^{-1}n_2^{-1})^{\hat\varphi}=(r^{-1})^{\hat\varphi}(n_2^{-1})^{\hat \varphi}=rn_2$, this contradicts our hypothesis that $1 \neq rn_2r^{-1}n_2^{-1}$. So we must have $(r^{-1}n_2^{-1})^{\hat\varphi}=n_1n_2r$. Again, this must be equal to $rn_2$. Therefore $n_1=rn_2r^{-1}n_2^{-1}$. But this contradicts our hypothesis on $n_1$. So there is no such $\varphi$, and we must have $A=A_0$.
 
Since $4 \le |S_{N_1}|$ and $S_{N_1} \subset S$, we have $|S|\ge 4$. Since $N_1 <N<R$ we have $|R:N_1| \ge 4$, so $$|S|=|S_{N_1}|+|N_1|+2 <\frac{|N_1|-6}{2}+|N_1|+2=\frac{3|N_1|}{2}-1 \le \frac{4|N_1|}{2}-8<\frac{|R|-6}{2},$$ where we have deduced $|N_1| \ge 14$ from $4 \le |S_{N_1}|\le (|N_1|-6)/2$ and have used this in our calculations.
\end{proof}

\section{Proof of Theorem~$\ref{thrm:mainHRR}$}
In this section we prove Theorem~$\ref{thrm:mainHRR}$. We actually prove a slightly stronger form, which will be useful for applying the results from Section~\ref{sec:induction}.
\begin{theorem}\label{thrm:mainHRRRR}
Let $R$ be a finite group. Then one of the following holds
\begin{enumerate}
\item\label{thrm:mainHRRR11}there exists a subset $S$ of $R$ with $4\le|S|\le (|R|-6)/2$ such that $\mathrm{Haar}(R,S)$  is a $\mathrm{HGR}$, 
\item\label{thrm:mainHRRR22}$R$ is an abelian group and there exists $S\subseteq R$ with $4\le |S|\le (|R|-6)/2$ and $\mathrm{Aut}(\mathrm{Haar}(R,S))=\hat R\rtimes\langle\iota\rangle$, where $\iota$ is defined in~$\eqref{iota}$,
\item\label{thrm:mainHRRR33}$R$ is one of the ten exceptional groups appearing in Table~$\ref{auxiliary}$,
\item\label{thrm:mainHRRR44}$|R|\le 13$.
\end{enumerate}
\end{theorem}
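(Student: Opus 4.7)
The plan is to prove Theorem~\ref{thrm:mainHRRRR} by strong induction on $|R|$. The base cases are precisely parts~\eqref{thrm:mainHRRR33} and~\eqref{thrm:mainHRRR44}: groups with $|R|\le 13$ are absorbed into part~\eqref{thrm:mainHRRR44}, and the ten groups of order at least $14$ listed in Table~\ref{auxiliary} are verified by computer (magma computations of the full automorphism group of every candidate Haar graph). For the inductive step, assume $|R|\ge 14$ and that $R$ is not one of the listed exceptions; the goal is to land in part~\eqref{thrm:mainHRRR11} (for $R$ non-abelian) or part~\eqref{thrm:mainHRRR22} (for $R$ abelian), with a subset $S$ satisfying the explicit size bound $4\le |S|\le (|R|-6)/2$.

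I would then split into four structural cases. First, if $R$ is cyclic, then $|R|\ge 22$ (since the smaller cyclic groups are exceptions), and Corollary~\ref{cor:cyclic construction} directly yields part~\eqref{thrm:mainHRRR22}; cyclic orders from $14$ to $21$ are resolved by a short computer check. Second, if $R$ is non-abelian simple, I would invoke Lemma~\ref{lemma:Rsimple}: if part~\eqref{eq:Rsimple1} of that lemma applies we produce $s,t\in R$ with $R=\langle s,t\rangle$ and $\mathbf{o}(s)\ge 12$ and feed them into Proposition~\ref{prop:2gen} (with $S_1$) to obtain the HGR; the eleven named simple groups in part~\eqref{eq:Rsimple2} are small enough to be checked directly by computer. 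Third, if $R$ is non-abelian and not simple, I would choose a proper non-identity normal subgroup $N$ of $R$ for which the induction hypothesis provides a subset $S_N\subseteq N$ realising part~\eqref{thrm:mainHRRR11} or~\eqref{thrm:mainHRRR22} of the theorem, and such that either $R/N$ is cyclic or $R/N$ is non-abelian simple. Corollary~\ref{cor:HRR-induct} then produces the required $S\subseteq R$. Finally, if $R$ is non-cyclic abelian and not on the exception list, the same inductive scheme works with $N$ a cyclic or abelian subgroup of large enough order, together with Corollary~\ref{cor:HRR-induct}; because $R/N$ is automatically cyclic in this setting, the output lies in part~\eqref{thrm:mainHRRR22} via the presence of $\iota$.

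The delicate case is when every proper non-identity normal subgroup $N$ of $R$ that satisfies the inductive hypothesis with the size bound either fails to give a cyclic or non-abelian-simple quotient, or falls into one of the excluded small structures. This typically occurs when $R$ has small abelian normal subgroups of index $2$. Here I would apply Lemma~\ref{moura} to extract the data $(N_1,n_1,n_2)$ and then Proposition~\ref{prop:one more time with feeling} to construct $S$ directly from an HGR for $N_1$ provided by induction, again yielding part~\eqref{thrm:mainHRRR11}. The excluded configurations in Lemma~\ref{moura}, namely $R$ dihedral of order $2p$, dihedral of order $8$, or $Q_8$, are exactly either covered by $|R|\le 13$ or appear among the ten exceptions.

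The main obstacle, as usual in arguments of this flavour, is bookkeeping at the boundary of the induction: ensuring that whenever we recurse to a subgroup $N$ of index $2$ or in a simple quotient, the subgroup $N$ genuinely satisfies the stronger conclusion with the sharp bound $4\le |S_N|\le (|N|-6)/2$ rather than merely being an HGR. This forces the exception list for the strengthened theorem to potentially differ from that of Theorem~\ref{thrm:mainHRR}, and a careful case-by-case verification (largely by computer) is needed for groups whose normal subgroup structure forces recursion into Table~\ref{auxiliary}. Once these boundary cases are charted, the propositions of Section~\ref{sec:induction} do all of the real construction work, and the proof of Theorem~\ref{thrm:mainHRR} follows as an immediate corollary by discarding the size bound on $S$ in parts~\eqref{thrm:mainHRRR11} and~\eqref{thrm:mainHRRR22}.
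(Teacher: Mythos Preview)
Your overall inductive scheme matches the paper's, but there is a genuine gap in your case~3. When $N$ is abelian and satisfies part~\eqref{thrm:mainHRRR22} while $R/N$ is cyclic, Corollary~\ref{cor:HRR-induct} only yields $\Aut(\Haar(R,S_R))_0=\hat R$, not $\Aut(\Haar(R,S_R))=\hat R$: its ``Furthermore'' clause requires either $\Aut(\Haar(N,S_N))=\hat N$ (which fails for abelian $N$) or $R/N$ non-abelian simple. So for non-abelian $R$ whose chosen maximal normal subgroup $N$ is abelian and lands in part~\eqref{thrm:mainHRRR22}, you have not explained how to reach part~\eqref{thrm:mainHRRR11}. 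The paper fills this gap with an argument you have omitted: it uses the sharper conclusion of Proposition~\ref{prop:cyclic trivial stabiliser} (that any $g$ with $(1,-1)^g=(1,1)$ restricts to an automorphism of $\Haar(N,S_N)$ and sends $(r,1)\mapsto(r^{-1},-1)$), combines it with Lemma~\ref{group automorphism}, and computes that the induced $\hat g\in\Aut(R)$ must invert $N$ and $r$, whence $(r^{-1}nr)^{\hat g}=r^{-1}n^{-1}r$ equals $rn^{-1}r^{-1}$ for all $n\in N$, i.e.\ $r^2$ centralises $N$. For $|R:N|$ odd this forces $R$ abelian; for $|R:N|=2$ it places you in the setting of Proposition~\ref{prop:one more time with feeling}. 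Your ``delicate case'' paragraph invokes that proposition, but with the wrong trigger: it is not a fallback for when all normal subgroups are too small, it is the endgame for this specific configuration.

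There is also a concrete error in that paragraph: you assert that the excluded configurations of Lemma~\ref{moura} (dihedral of order $2p$, $D_8$, $Q_8$) are all absorbed by $|R|\le 13$ or by Table~\ref{auxiliary}. This is false for $D_{2p}$ with $p\ge 11$ prime; those groups are not exceptions. The paper handles them with the $S_2$ construction of Proposition~\ref{prop:2gen}: in the flow above one has $N=C_p$ satisfying part~\eqref{thrm:mainHRRR22}, hence $p\ge 17$, so $\mathbf{o}(s)=p\ge 14$ and the size bound in the ``Furthermore'' of Proposition~\ref{prop:2gen} applies.
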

\begin{proof}
We argue by induction on $|R|$. When $|R|\le 13$, part~\eqref{thrm:mainHRRR44} is satisfied. Suppose  $|R|\ge 14$ and let $N$ be a normal subgroup of $R$ with $R/N$ simple. In particular, $R/N$ is either cyclic of prime order, or a non-abelian simple group. As $|N|<|R|$, we may assume that the result holds for $N$.

\smallskip

\noindent\textsc{Assume that $N$ satisfies part~\eqref{thrm:mainHRRR11}.}

\smallskip

\noindent Then Corollary~\ref{cor:HRR-induct} immediately implies that $R$ satisfies part~\eqref{thrm:mainHRR11}.

\smallskip

\noindent\textsc{Assume that $N$ satisfies part~\eqref{thrm:mainHRRR22}.}

\smallskip

\noindent Suppose $R/N$ is cyclic of order $p$ and let $r\in R$ with $R=\langle N,r\rangle$. 
Here we apply Proposition~\ref{prop:cyclic trivial stabiliser}. When $p\ge 3$, we let $S_R=S_1$ and, when $p=2$, we let $S_R=S_2$, where $S_1$ and $S_2$ are defined in Proposition~\ref{prop:cyclic trivial stabiliser}. We have $4\le |S_R|\le (|R|-6)/2$ and $\mathrm{Aut}(\mathrm{Haar}(R,S_R))_0=\hat R$. If $\mathrm{Aut}(\mathrm{Haar}(R,S_R))=\hat R$, then $R$ satisfies part~\eqref{thrm:mainHRRR11} and hence we may suppose $\hat R<\mathrm{Aut}(\mathrm{Haar}(R,S_R))$. From Proposition~\ref{prop:cyclic trivial stabiliser}, there exists $g\in \mathrm{Aut}(\mathrm{Haar}(R,S))$ with $(1,-1)^g=(1,1)$, $(r,1)^g=(r^{-1},-1)$, $(N\times\{-1,1\})^g=N\times\{-1,1\}$ and with $g$ restricting to an automorphism of the Haar graph $\mathrm{Haar}(N,S_N)$. Since $N$ satisfies part~\eqref{thrm:mainHRRR22}, we have $(n,\varepsilon)^g=(n^{-1},-\varepsilon)$, $\forall (n,\varepsilon)\in N\times\{-1,1\}$. Using Lemma~\ref{group automorphism}, let $\hat g\in \mathrm{Aut}(R)$ and $x\in R$ with $(y,-1)^g=(y^{\hat g},1)$ and $(y,1)^g=(y^{{\hat g}^{-1}}x,-1)$, $\forall y\in R$. Since $(x,-1)=(1,1)^g=(1,-1)$, we deduce $x=1$. For every $n\in N$, we have $r^{-1}nr\in N$ and hence $(r^{-1}nr)^{\hat g}=r^{-1}n^{-1}r$, but we also have
$$(r^{-1}nr)^{\hat g}=(r^{\hat g})^{-1}n^{\hat g}r^{\hat g}=rn^{-1}r^{-1}. $$
This implies $$r^{-1}n^{-1}r=rn^{-1}r^{-1},$$
that is, $r^2$ centralizes $N$. When $|R:N|=p$ is odd, this implies that $r$ centralises $N$ and hence $R$ is abelian and part~\eqref{thrm:mainHRRR22} immediately follows. The same conclusion holds when $p=2$ and $R$ is abelian, because in this case $g$ is the automorphism $\iota$ defined in~\eqref{iota}. Therefore we may suppose that $p=2$ and $R$ is not abelian. Now, Proposition~\ref{prop:one more time with feeling} implies that part~\eqref{thrm:mainHRRR11} holds, except possibly when $R$ is a dihedral group of order $2p$. This case is covered by Proposition~\ref{prop:2gen} and again implies that part~\eqref{thrm:mainHRRR11} holds.

Assume finally that $R/N$ is a non-abelian simple group. Here, Proposition~\ref{prop:not cyclic trivial stabiliser} implies that part~\eqref{thrm:mainHRRR11} is satisfied.

In particular, for the rest of the argument we may suppose that 
\begin{center}
$(\dag):$  every proper normal subgroup of $R$ satisfies part~\eqref{thrm:mainHRRR33} or~\eqref{thrm:mainHRRR44}.
\end{center}

\smallskip

\noindent\textsc{Suppose that $R/N$ is cyclic.}

\smallskip

\noindent  Let $p=|R:N|$. If $p<31$, then $|R|=p|N|\le 29\cdot 2^5=928$ and the veracity of this result can be checked with a computer using the library of small groups in magma.\footnote{We did find a number of additional theoretical constructions that deal with many of these cases, but the additional constructions use similar ideas to the constructions already presented in this paper, and we did not believe they contained sufficient value to include them for results that are computationally straightforward.} Assume $p>31$ and let $P$ be a Sylow $p$-subgroup of $R$. When $|N|\le 13$ or when $N$ is in Table~\ref{auxiliary}, we see that $p$ is relatively prime to $N$ and hence $R=N\rtimes P$. 
Moreover, no group of order at most $13$ and no group in Table~\ref{auxiliaryy} admits an automorphism of order $p$ and hence $R=N\times P$. As $P\unlhd R$, this contradicts $(\dag)$, except when $R=P$. However, when $R=P$, Lemma~\ref{cyclic2} implies that $R$ satisfies part~\eqref{thrm:mainHRRR22}.

\smallskip

\noindent\textsc{Suppose that $R/N$ is non-abelian simple.}

\smallskip

\noindent Let $C={\bf C}_R(N)$ and observe $C\unlhd R$. Assume first $R=C$. Then $R={\bf C}_R(N)$ and hence $N={\bf Z}(R)$. If $R'<R$, then we contradict $(\dag)$. Therefore $R=R'$ and $R$ is quasi-simple.\footnote{Recall that a group $R$ is said to be quasi-simple if $R$ equals its derived subgroup $R'$ and $R/{\bf Z}(R)$ is non-abelian simple, where we are denoting with ${\bf Z}(R)$ the centre of $R$.}
Suppose that $R/{\bf Z}(G)$ is not isomorphic to one of the exceptional cases listed in Lemma~\ref{lemma:Rsimple}. Then, from Lemma~\ref{lemma:Rsimple}, there exist $s,t\in R$ with $R=\langle s,t\rangle {\bf Z}(R)$  and ${\bf o}(s)\ge 12$. We have
$$R=R'=(\langle s,t\rangle {\bf Z}(R))'=\langle s,t\rangle'{\bf Z}(R)'=\langle s,t\rangle'$$
and hence $R=\langle s,t\rangle$.
Set $S=\{s^i:0\le i\le 6\}\cup\{s^8,t,ts\}$. From Proposition~\ref{prop:2gen}, $\mathrm{Haar}(R,S)$ is a HGR, and since $|R:\langle s \rangle|>2$, we also have $4 \le |S| \le (|R|-6)/2$. When $R/{\bf Z}(R)$ is one of the exceptional cases listed in Lemma~\ref{lemma:Rsimple}, the proof follows  with the help of a computer. Therefore, we may suppose that $C<R$ and hence, by $(\dag)$, $C$ has order at most $13$ or is isomorphic to a group in Table~\ref{auxiliary}.

 Since $R/N$ is non-abelian simple, we  deduce $C\le N$. The action of $R$ by conjugation on $N$ gives rise to a group homomorphism from $R$ to $\mathrm{Aut}(N)$. The kernel of this homomorphism is ${\bf C}_R(N)=C$. As $C\le N$, $R/C$ is not solvable and hence $N$ is a group in Table~\ref{auxiliary} having a non-solvable automorphism group. A direct inspection of Table~\ref{auxiliary} shows that $N$ is isomorphic to one of the following groups
 $$C_2^3,\,C_2^4,\,C_2^5.$$

 When $N=C_2^3$, we have $\mathrm{Aut}(N)\cong\mathrm{SL}_3(2)$. Since no proper subgroup of $\mathrm{SL}_3(2)$ is solvable, we deduce $R/N\cong\mathrm{SL}_3(2)$. In particular, $R$ is a group extension of $C_2^3$ via $\mathrm{SL}_3(2)$. Therefore, we may check the veracity of the statement with a computer computation, constructing all group extensions of $C_2^3$ via $\mathrm{SL}_3(2)$. The analysis in all other cases is similar and is performed with a computer.
\end{proof}

\thebibliography{10}

\bibitem{BabaiI}
L. Babai, W. Imrich, Tournaments with given regular group,
\textit{Aequationes Math.} \textbf{19} (1979), 232--244.

\bibitem{Babai}
L.~Babai, Finite digraphs with given regular automorphism groups,
\textit{Period. Math. Hungar.} \textbf{11} (1980), 257--270.

\bibitem{BaGo}L.~Babai, C.~D.~Godsil, On the automorphism groups of almost all Cayley graphs, \textit{European J. Combin.} \textbf{3} (1982), 9--15.

\bibitem{Barmak}J.~A.~Barmak, Regular and semi-regular representations of groups by posets, \href{ 	
https://doi.org/10.48550/arXiv.2307.03106}{arXiv:10.48550/arXiv.2307.03106}.

\bibitem{birkhoff}G.~Birkhoff, Sobre los grupos de automorfismos (On groups of automorphisms), \textit{Rev. Un. Mat. Argentina} \textbf{11} (1946), 155--157.

\bibitem{magma}
W. Bosma, C. Cannon, C. Playoust, The MAGMA algebra system I: The user language,
\textit{J. Symbolic Comput.} \textbf{24} (1997), 235--265.

\bibitem{atlas} J.~H.~Conway, R.~T. Curtis, S.~P.~Norton, R.~A.~Parker, R.~A.~Wilson, An $\mathbb{ATLAS}$ of Finite Groups \textit{Clarendon Press, Oxford}, 1985; reprinted with corrections 2003.

\bibitem{DFS}
J.~-L.~Du, Y.~-Q.~Feng, P.~Spiga, A classification of the graphical $m$-semiregular representations of finite groups, \textit{J. Combin. Theory Ser. A} \textbf{171} (2020), 105174.

\bibitem{DFS1}
J.~-L.~Du, Y.~-Q.~Feng, P.~Spiga, On the existence and the enumeration of bipartite regular representations of Cayley graphs over abelian groups, \textit{J. Graph Theory} \textbf{95} (2020), 677--701.

\bibitem{DJSalso}
J.-L.~Du, Y.-Q.~Feng, P.~Spiga, 
On Haar digraphical representations of groups,
\textit{Discrete Math.} \textbf{343} (2020), 112032.

\bibitem{DJS}J.-L.~Du, Y.-Q.~Feng, P.~Spiga,  On $n$-partite digraphical representations of finite groups, \textit{J. Combin. Theory Ser. A} \textbf{189} (2022), Paper No. 105606, 13 pp.

\bibitem{EP}I.~Est\'elyi, T.~Pisanski, 
Which Haar graphs are Cayley graphs? \textit{Electron. J. Combin.} \textbf{23} (2016), no. 3, Paper 3.10, 13 pp. 

\bibitem{FKWY}Y.~-Q.~Feng, I.~Kov\'acs, J.~Wang, D.~-W.~Yang, 
Existence of non-Cayley Haar graphs.
\textit{European J. Combin.} \textbf{89} (2020), 103146, 12 pp.

\bibitem{FKY}Y.~-Q.~Feng, I.~Kov\'acs, D.~-D.~Yang, 
On groups all of whose Haar graphs are Cayley graphs,
\textit{J. Algebraic Combin.} \textbf{52} (2020), no. 1, 59--76. 

\bibitem{Gan}Y.~Gan, P.~Spiga, B.~Xia, Asymptotic enumeration of Haar graphical representations, in preparation.

\bibitem{Godsil}C.~D.~Godsil,
GRRs for nonsolvable groups,
\textit{Colloq. Math. Soc. J\'{a}now Bolyai} \textbf{25} (1983), 221--239.

\bibitem{GuralnickKantor}R.~M.~Guralnick, W.~M.~Kantor, Probabilistic generation of finite groups, \textit{J. Algebra} \textbf{234} (2000), 743--792.

\bibitem{KantorSeress}W.~M.~Kantor, \`A.~Seress, Large element orders and the characteristic of Lie-type simple groups,
\textit{J. Algebra} \textbf{322} (2009),  802--832.

\bibitem{MorrisSpiga} J.~Morris, P.~Spiga, Asymptotic enumeration of Cayley digraphs, \textit{Israel J. Math.} \textbf{242} (2021), 401--459.

\bibitem{MorrisSpiga1}
J. Morris, P. Spiga, Every finite non-solvable group admits an oriented regular representation, \textit{J. Combin. Theory Ser. B} \textbf{126} (2017), 198--234.

\bibitem{MorrisSpiga3}J. Morris, P.~Spiga, Classification of finite groups that admit an oriented regular representation, \textit{Bull. Lond. Math. Soc.} \textbf{50} (2018), 811--831.

\bibitem{Spiga}
P. Spiga, Finite groups admitting an oriented regular representation,
\textit{J. Combin. Theory Ser. A} \textbf{153} (2018), 76--97.

\bibitem{DFR}P.~Spiga, On the existence of Frobenius digraphical representations, \textit{The Electronic Journal of Combinatorics} \textbf{25} (2018), paper \#P2.6.
\bibitem{GFR}P. Spiga, On the existence of graphical Frobenius representations and their asymptotic enumeration: an answer to the GFR conjecture, \textit{J. Comb. Theory Series B}, \textbf{142} (2020), 210--243.

\bibitem{spiga1}P.~Spiga, The maximum order of the elements of a finite symplectic group of even characteristic,
\textit{Comm. Algebra} \textbf{43} (2015), 1417--1434.

\bibitem{Xiaf}
B.~Z.~Xia, T.~Fang, Cubic graphical regular representations of $\mathrm{PSL}_2(q)$, \textit{Discrete Math.} \textbf{339} (2016), 2051--2055.

\bibitem{XiaZheng}B.~Xia, S.~Zheng, Asymptotic enumeration of graphical regular representations, \textit{Proc. London Math.
Soc.} \textbf{127} (2023), 1424--1450.
\end{document}